\numberwithin{equation}{section}
\newcommand{\comment}[1]{}
\newtheorem{theo} {Theorem} [section]
\newtheorem{prop}[theo]{Proposition}
\newtheorem{coro}  [theo]     {Corollary}
\newtheorem{lemm}  [theo]     {Lemma}
\newtheorem{defi}  [theo]     {Definition}
\newcommand{\End}{\textmd{End}}
\def\mc{\mathcal}
\newcommand{\db}{\overline{\partial}}
\newcommand{\G}{\mathbb{G}}
\newcommand{\p}{\partial}
\def\b{\bar}
\begin{document}

\title[Deformation of CR-structures]{On Tian-Todorov lemma and its applications to deformation of CR-structures}

%    Information for first author
\author{Sheng Rao}
\address{Department of Mathematics and Statistics, Wuhan University, Wuhan 430072, China}
\email{likeanyone@whu.edu.cn}

%\thanks{Support information for the second author.}

%    Information for second author
\author{Yongpan Zou}
\address{Department of Mathematics and Statistics, Wuhan University, Wuhan 430072, China}
\email{yongpan\_zou@whu.edu.cn}

%    \thanks will become a 1st page footnote.
%\thanks{The first author was supported in part by NSF Grant \#000000.}

\subjclass[2010]{Primary 32G05; Secondary 13D10, 14D15, 53C55}
\keywords{Deformations of complex structures; Deformations and
infinitesimal methods, Formal methods; deformations, Hermitian and
K\"ahlerian manifolds}

\thanks{Rao is partially supported by NSFC (Grant No. 11671305, 11771339).}

\begin{abstract}
We give a new Tian-Todorov lemma on deformations of CR-structures and use it to reprove the deformation unobstructedness of normal compact strongly pseudoconvex CR-manifold under the assumption of $d'd''$-lemma, more faithfully following Tian-Todorov's approach.

\end{abstract}

\maketitle

%\tableofcontents

%
\section{Introduction}
  It is a celebrated result in deformation theory that the deformations of Calabi-Yau manifold are unobstructed, by the Bogomolov-Tian-Todorov theorem, due to F. Bogomolov \cite{[B]}, G. Tian \cite{[T]} and A. Todorov \cite{[T1]}. Z. Ran and Y. Kawamata give pure algebraic proofs in \cite{kaw,ran} independently and see also another proof in \cite{C}. The remarkable generalization of Bogomolov-Tian-Todorov theorem to the logarithmic case is due to L. Katzarkov-M. Kontsevich-T. Pantev \cite{KKP08} and D. Iacono \cite{Iacono} by Deligne's spectral sequence (see also a more analytic argument in \cite{lrw}). In \cite{[P]}, D. Popovici proved the Kuranishi family of Calabi-Yau $\partial\overline{\partial}$-manifolds is unobstructed. In Popovici's paper, a compact complex manifold $X$ will be said to be a \emph{$\partial\overline{\partial}$-manifold} if the \emph{$\partial\overline{\partial}$-lemma} holds on $X$, which means for every pure-type $d$-closed form on a complex manifold, the properties of
$d$-exactness, $\p$-exactness, $\b{\p}$-exactness and
$\p\b{\p}$-exactness are equivalent on $X$. Obviously, the $\partial\overline{\partial}$-lemma is a weaker property than the K\"ahlerness.

Moreover, for the deformation of CR-structures, T. Akahori-K. Miyajima obtain a corresponding result in \cite{[AM]}. Firstly, they proved the so called ``a CR-analogue of Tian-Todorov's lemma".
\begin{lemm} [{\cite[Lemma 4.6]{[AM]}}]\label{tt}
If $ \alpha, \beta \in A^{n-2,1}(M) $ and satisfy $ d'\alpha = d'\beta = 0 $, then
$$ 2~\imath_2 [\imath_1^{-1}\alpha, \imath_1^{-1}\beta] = - d'(\imath_1^{-1}\alpha \lrcorner \beta + \imath_1^{-1}\beta \lrcorner \alpha),$$
where the bundle isomorphism $\imath_\cdot$ is given in \eqref{imath-q} and $\imath_\cdot^{-1}$ is its inverse.
\end{lemm}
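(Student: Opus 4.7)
The plan is to transcribe the classical Tian--Todorov calculation into the CR framework provided by the isomorphisms $\imath_q$. Set $X := \imath_1^{-1}\alpha$ and $Y := \imath_1^{-1}\beta$, so that $\imath_1 X = \alpha$ and $\imath_1 Y = \beta$ lie in $A^{n-1,1}(M)$. The isomorphism $\imath_q$ is to be understood as contraction with a non-vanishing CR form of top type (the CR analogue of a holomorphic volume form), so the desired conclusion is that the bracket $[X,Y]$, pushed through $\imath_2$, becomes $d'$-exact whenever $\alpha$ and $\beta$ are $d'$-closed.

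The key step is to establish a Cartan-type identity of the shape
$$\imath_2[X,Y] \;=\; -\,d'\bigl(X \lrcorner \imath_1 Y\bigr) \;+\; (\text{terms involving } d'(\imath_1 X)\text{ and } d'(\imath_1 Y)),$$
by expanding the Lie-derivative formula $\mathcal{L}_X(Y \lrcorner \omega) = [X,Y] \lrcorner \omega + Y \lrcorner \mathcal{L}_X \omega$ through the tangential Cartan identity $\mathcal{L}_X = d'\imath_X + \imath_X d'$, applied to the CR-volume-like form defining $\imath_\cdot$, which is $d'$-closed in the normal compact strongly pseudoconvex setting used by Akahori--Miyajima. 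The hypotheses $d'\alpha = d'\imath_1 X = 0$ and $d'\beta = d'\imath_1 Y = 0$ then annihilate the correction terms, leaving $\imath_2[X,Y] = -d'(X \lrcorner \beta)$. Running the same computation with the roles of $X$ and $Y$ exchanged gives the companion expression $\imath_2[X,Y] = -d'(Y \lrcorner \alpha)$; summing the two relations produces the coefficient $2$ on the left and the symmetric combination $\imath_1^{-1}\alpha \lrcorner \beta + \imath_1^{-1}\beta \lrcorner \alpha$ on the right, exactly as claimed.

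The principal obstacle I anticipate lies in justifying the Cartan-type identity within the tangential CR-complex. Because $d'$ is only the restriction of the ambient exterior derivative to the complex-tangent directions, the naive formula $\mathcal{L}_X = d'\imath_X + \imath_X d'$ may pick up a Levi-form correction term; one must verify either that any such correction lies in a bidegree outside the image of $\imath_2$ and therefore drops out, or that it is cancelled by the analogous correction produced when $\mathcal{L}_X$ is applied to the CR-volume form. A careful bidegree bookkeeping, together with the sign conventions of the Schouten--Nijenhuis-type bracket used by Akahori--Miyajima, is what ultimately pins down the precise coefficient $-1$ appearing on the right-hand side of the stated identity.
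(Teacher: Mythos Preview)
Your overall strategy---establish a general contraction identity relating $[\imath_1^{-1}\alpha,\imath_1^{-1}\beta]\lrcorner\omega$ to $d'$ of lower-order contractions, then kill the correction terms using $d'\alpha=d'\beta=0$ and $d'\omega=0$, and finally symmetrize in $\alpha,\beta$ to produce the factor $2$---is correct and is exactly how the paper deduces this statement as Corollary~\ref{CRTT} from the more general Lemma~\ref{TI-TO}. The difference lies in how the key identity (your ``Cartan-type identity'', the paper's Lemma~\ref{TI-TO}) is obtained. You propose the classical route: define $\mathcal{L}_X:=d'\iota_X+\iota_X d'$ and invoke $\mathcal{L}_X(Y\lrcorner\omega)=[X,Y]\lrcorner\omega+Y\lrcorner\mathcal{L}_X\omega$. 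The paper instead imports the twisted commutator formula from generalized complex geometry (Corollary~\ref{H1AB2}): writing $A=\alpha^{i,k}\,\overline{e}_i\cdot e_k^*$ and $B=\beta^{j,l}\,\overline{e}_j\cdot e_l^*$ as sections of $\wedge^2 E^*$, the abstract identity \eqref{iH1AB2} already has the right shape, and a pointwise computation in the special frame of Lemma~\ref{4.1} together with the bracket formula of Lemma~\ref{4.2} verifies that the Courant-type bracket $[A,B]\cdot\rho$ coincides with $[\imath_1^{-1}\alpha,\imath_1^{-1}\beta]\lrcorner\rho$. This packaging absorbs precisely the Levi-form bookkeeping you flag as the ``principal obstacle'': rather than tracking corrections to a tangential Cartan formula, the paper hides them inside the local verification that the two bracket notions agree.

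Your route is essentially the original Akahori--Miyajima argument and can be made to work, but be aware that the step you call the Cartan identity is not a standard input here: $X=\imath_1^{-1}\alpha$ and $Y=\imath_1^{-1}\beta$ are ${}^0\overline{T}''$-valued $(0,1)$-forms, not vector fields, so the relation $\mathcal{L}_X(Y\lrcorner\omega)=[X,Y]\lrcorner\omega+Y\lrcorner\mathcal{L}_X\omega$ with the CR Schouten-type bracket on the right is itself the content of the lemma rather than a prerequisite. The paper's generalized-complex detour buys a clean, already-proved source for exactly this identity; your approach would require reproving it by hand in adapted coordinates. (Minor slip: $\alpha,\beta\in A^{n-2,1}(M)$, not $A^{n-1,1}(M)$, since $\wedge^{n-1}(T')^*=(\mathbb{C}\zeta)^*\wedge\wedge^{n-2}({}^0\overline{T}'')^*$ in the paper's bigrading.)
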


Then, by using Lemma \ref{tt} and Hodge theory, Akahori-Miyajima follow the approach of Tian \cite{[T]} and Todorov \cite{[T1]} to obtain the unobstructedness for deformations of CR structures.

\begin{theo} [{\cite[MAIN THEOREM]{[AM]}}]
Let $ (M,\ ^0T'') $ be a normal strongly pseudoconvex manifold with $ \dim_{\mathbb{R}}M  = 2n-1 \geq 7 $. And we assume that its canonical line bundle $ K_M = \wedge^n(T')^* $ is trivial in CR-sense. Then the obstructions in $ \imath_1^{-1}(\mathbf{Z}^1) $ appear in $ \mathbf{J}^{n-2,2} $. That is, if $ \mathbf{J}^{n-2,2}=0 $, then any deformation of CR structures in $ \imath_1^{-1}(\mathbf{Z}^1) $ is unobstructed. Here $\mathbf{Z}^1$ and $ \mathbf{J}^{n-2,2} $ are given by \eqref{bf-z} and \eqref{assume}, respectively.
\end{theo}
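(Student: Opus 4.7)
The plan is to adapt Tian-Todorov's formal power-series construction to the CR setting, using Lemma \ref{tt} and the $d'd''$-lemma in place of their K\"ahler analogues. I would look for a formal family $\phi(t) = \sum_{k \geq 1} \phi_k t^k$ of deformations of the CR-structure, satisfying the Maurer-Cartan equation $d''\phi = -\tfrac{1}{2}[\phi, \phi]$ order by order, with $\phi_1 \in \imath_1^{-1}(\mathbf{Z}^1)$ a prescribed harmonic infinitesimal deformation. The CR-triviality of $K_M$ pins down a nowhere-vanishing section $\Omega$, making $\imath_1, \imath_2$ genuine bundle isomorphisms and allowing the whole recursion to be run on the form side, where the $d'd''$-lemma operates.

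The inductive step is the core of the argument. Suppose $\phi_1, \dots, \phi_{k-1}$ have been constructed so that the Maurer-Cartan equation holds modulo $t^k$. Setting $\Psi_k := -\tfrac{1}{2}\sum_{i+j=k}[\phi_i, \phi_j]$, a standard graded-Jacobi manipulation using the lower-order identities yields $d''\Psi_k = 0$. Applying $\imath_2$ and invoking Lemma \ref{tt} pair by pair rewrites $\imath_2 \Psi_k$ as a $d'$-exact form of bidegree $(n-2, 2)$, while $d''$-closedness passes through the isomorphism. The $d'd''$-lemma then promotes this simultaneously $d'$-exact and $d''$-closed form to $d'd''$-exact, say $\imath_2 \Psi_k = d'd'' \eta_k$ for some $\eta_k$. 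Undoing $\imath_2$ produces $\phi_k$ with $d''\phi_k = \Psi_k$, closing the induction. The cohomological obstruction to carrying out this step is precisely the class of $\imath_2 \Psi_k$ in $\mathbf{J}^{n-2, 2}$, so the vanishing of $\mathbf{J}^{n-2, 2}$ removes the obstruction at every order.

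Convergence of the formal series is handled by a Kuranishi-style a priori estimate: at each step I would choose $\phi_k := \imath_2^{-1} d''^{*} G(\imath_2 \Psi_k)$ using the Green operator $G$ of the Kohn-Rossi Laplacian, and bound $\|\phi_k\|$ in a tame Sobolev or H\"older norm by induction on $k$, yielding power-series convergence for small $|t|$. The hypotheses of strong pseudoconvexity and $\dim_{\mathbb{R}} M \geq 7$ enter here to provide the necessary subelliptic estimates and the discrete spectrum of the Laplacian. The main obstacle will be the bookkeeping of bidegrees and the compatibility of $\imath_1, \imath_2$ with $d', d''$ and with the bracket; in particular, one must verify that the inputs to Lemma \ref{tt} satisfy its $d'$-closedness hypothesis at every order, which in turn requires a careful gauge-fixing choice in the inductive scheme, and one must check that each correction $\phi_k$ stays inside $\imath_1^{-1}(\mathbf{Z}^1)$ so that the formal family remains in the intended deformation space.
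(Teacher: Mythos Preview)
Your strategy matches the paper's: iterate to build a formal series, use Lemma~\ref{tt} to make each obstruction $d'$-exact, invoke $\mathbf{J}^{n-2,2}=0$ to solve the $d''$-equation, and close with Kuranishi-type convergence estimates.

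The one substantive gap is the step where you assert $d''\Psi_k=0$ by a ``standard graded-Jacobi manipulation.'' This implicitly treats $\bigl(\Gamma(M,{}^0\overline{T}''\otimes\wedge^\bullet({}^0T'')^*),\,\overline{\partial}_b,\,[\cdot,\cdot]\bigr)$ as a DGLA, which is \emph{not} automatic in the CR setting: the $T'$-valued $\overline{\partial}_b$ produces $\zeta$-components through the Levi form (visible already in the local formula $\overline{\partial}_b(\alpha_k)(e_{k_1},e_{k_2})=\cdots+\sqrt{-1}(\alpha_k^{k_2,k_1}-\alpha_k^{k_1,k_2})\zeta$ in the proof of Lemma~\ref{5.4}), so neither the derivation property of $\overline{\partial}_b$ with respect to the bracket nor the graded Jacobi identity is available for free on this complex. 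The paper establishes exactly the needed closedness $\overline{\partial}_b\bigl(\sum_{m+h=l+1}[\imath_1^{-1}\phi_m,\imath_1^{-1}\phi_h]\bigr)=0$ by a direct local-frame computation (Lemma~\ref{5.4}), using the inductive hypotheses to force the symmetry $\alpha_k^{k_1,k_2}=\alpha_k^{k_2,k_1}$ that kills the $\zeta$-terms at each stage; the authors explicitly flag this lemma as what distinguishes their proof from the original argument in \cite{[AM]}. So the step you label ``standard'' is precisely the technical core that must be supplied. Your closing concern about keeping each $\phi_k$ inside $\imath_1^{-1}(\mathbf{Z}^1)$ is well placed and is handled in the paper by running the entire recursion on the form side inside $\mathbf{Z}^1\subset\mathbf{F}^{n-2,1}$.
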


In our paper, the first goal is to present a new approach to generalize Lemma \ref{tt} by using a twisted commutator formula on generalized complex manifolds. We obtain:
\begin{lemm} [= Lemma \ref{TI-TO}]
If $ \alpha, \beta \in A^{n-2,1}(M) $ and $ \rho \in \Gamma(M, \wedge ^{\bullet}(T')^*) $, we have
$$ [\imath_1^{-1}\alpha, \imath_1^{-1}\beta]\lrcorner\rho = \imath_1^{-1}\alpha\lrcorner d'(\imath_1^{-1}\beta\lrcorner\rho) + \imath_1^{-1}\beta\lrcorner d'(\imath_1^{-1}\alpha\lrcorner\rho) - d'(\imath_1^{-1}\beta\lrcorner \imath_1^{-1}\alpha\lrcorner\rho) - \imath_1^{-1}\beta\lrcorner(\imath_1^{-1}\alpha\lrcorner d'\rho). $$
\end{lemm}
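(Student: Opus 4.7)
The strategy is to recognise the stated identity as a CR-analogue of a derived-bracket or twisted Cartan formula, and to reduce it by $C^{\infty}(M)$-bilinearity to a pointwise computation in a local frame. Writing $\varphi:=\imath_1^{-1}\alpha$ and $\psi:=\imath_1^{-1}\beta$ as finite sums of decomposable vector-valued $(0,1)$-forms, the first step is to reduce to the case $\varphi = \theta \otimes X$ and $\psi = \eta \otimes Y$, with $\theta,\eta$ local $(0,1)$-forms and $X,Y$ local sections of $T'$.

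With this normalisation, the bracket used in Akahori--Miyajima's setting takes the schematic form
\begin{equation*}
[\theta \otimes X, \eta \otimes Y] \;=\; \theta \wedge \eta \otimes [X, Y] \;+\; \text{correction terms linear in } d'\theta \text{ and } d'\eta.
\end{equation*}
Contracting with $\rho$ via $(\omega\otimes Z)\lrcorner \rho = \omega \wedge (Z\lrcorner\rho)$ then writes the left-hand side as an explicit sum of operators of the shape $\wedge\theta$, $\wedge\eta$, $\wedge d'\theta$, $\wedge d'\eta$, $\imath_X$, $\imath_Y$, composed and applied to $\rho$.

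For the right-hand side I would unfold each nested contraction with the same rule, then push $d'$ through wedges using the graded Leibniz rule. This produces a similar collection of operators acting on $\rho$. Matching the two sides reduces to the classical Cartan magic formula $\mathcal{L}_X = d'\imath_X + \imath_X d'$ together with its consequence $\imath_{[X,Y]} = \mathcal{L}_X\imath_Y - \imath_Y\mathcal{L}_X$, both valid on $\Gamma(M, \wedge^{\bullet}(T')^*)$.

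The main obstacle is the combinatorial sign-tracking. The ``twisted'' contributions to $[\varphi, \psi]$ involving $d'\theta$ and $d'\eta$ must cancel, after the Leibniz expansion, against the pieces of $\varphi\lrcorner d'(\psi\lrcorner\rho)$ and $\psi\lrcorner d'(\varphi\lrcorner\rho)$ that arise when $d'$ hits the $(0,1)$-form factor of $\psi$ and $\varphi$ respectively. Getting the signs right requires careful use of the anticommutation relations between $\imath_X$ (for $X \in \Gamma(T')$) and wedge products with $(0,1)$-forms. As a sanity check, imposing $d'\alpha = d'\beta = 0$ and contracting with a local trivialisation of $K_M = \wedge^n(T')^*$ should recover Akahori-Miyajima's Lemma \ref{tt} up to the bundle isomorphisms $\imath_1, \imath_2$.
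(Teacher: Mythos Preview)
Your approach is viable but genuinely different from the paper's. The paper does \emph{not} unwind the identity via Cartan's magic formula on decomposable tensors. Instead, it first establishes a general twisted commutator formula on $T\oplus T^*$ (their Proposition~\ref{3.2} and its specialisation Corollary~\ref{H1AB2}): for $A,B\in C^\infty(\wedge^2 E^*)$ and any form $\rho$,
\[
d_R(A\cdot B\cdot\rho)=A\cdot d_R(B\cdot\rho)+B\cdot d_R(A\cdot\rho)-[A,B]\cdot\rho-A\cdot B\cdot d_R\rho,
\]
where $[A,B]$ is the generalized Schouten bracket and the dot is the Clifford-type action \eqref{action}. The proof of the lemma then consists of (i) interpreting $\imath_1^{-1}\alpha$ and $\imath_1^{-1}\beta$ as elements $A,B\in C^\infty(\wedge^2 E^*)$ with $E={}^0T''\otimes({}^0\overline{T}'')^*$, (ii) computing $[A,B]$ in the special local frame of Lemma~\ref{4.1} via the Courant bracket, and (iii) matching this against the CR bracket via the explicit coordinate formula of Lemma~\ref{4.2}. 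Once $[A,B]\cdot\rho=[\imath_1^{-1}\alpha,\imath_1^{-1}\beta]\lrcorner\rho$ is established, the remaining identifications $A\cdot(-)=\imath_1^{-1}\alpha\lrcorner(-)$ etc.\ are immediate, and Corollary~\ref{H1AB2} (with $d'$ in the role of $d_R$) gives the statement.

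Your route bypasses the generalized-complex machinery entirely and derives the identity from $\mathcal L_X=d'\iota_X+\iota_X d'$ and $\iota_{[X,Y]}=[\mathcal L_X,\iota_Y]$ on $\Gamma(M,\wedge^\bullet(T')^*)$. This is more elementary and closer in spirit to the classical Tian--Todorov argument; the trade-off is that you must do the sign bookkeeping yourself and also check that these Cartan identities really hold for $d'$ acting on $(T')^*$-forms (this uses the integrability of $T'$, which follows from the normality assumption \eqref{2.5}--\eqref{2.6}). One point to tighten: your phrase ``reduce by $C^\infty(M)$-bilinearity'' is inaccurate, since neither side of the identity is $C^\infty$-linear in $\varphi$ or $\psi$ (both involve first derivatives of the coefficients). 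The correct reduction is by additivity to terms $\varphi=f\,\overline e_i\otimes e_k^*$, carrying the scalar $f$ through the Leibniz expansion; alternatively, work at a fixed point in the adapted frame of Lemma~\ref{4.1}, as the paper does.
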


Then we reprove the main theorem by the iteration methods more faithfully along the lines of the approach in Tian \cite{[T]} and Todorov \cite{[T1]} after we establish the following crucial lemma.
\begin{lemm} [= Lemma \ref{5.4}]
Assume that $ \phi_k \in \mathbf{Z}^1$ satisfy
$$ \overline{\partial}_b(\imath_1^{-1}\phi_k) = -\frac{1}{2} \sum\limits_{m+h=k} [\imath_1^{-1}\phi_m, \imath_1^{-1}\phi_h],$$
for any $k=2,\cdot\cdot\cdot,l.$  Moreover,
$$ \overline{\partial}_b(\imath_1^{-1}\phi_1)=0.$$
Then one has
$$ \overline{\partial}_b\left(\sum\limits_{m+h=l+1} [\imath_1^{-1}\phi_m, \imath_1^{-1}\phi_h]\right) = 0. $$
\end{lemm}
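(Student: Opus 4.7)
The plan is to treat $\psi_k := \imath_1^{-1}\phi_k$ as a degree-$1$ element of the differential graded Lie algebra underlying Akahori--Miyajima's deformation theory, with differential $\overline{\partial}_b$ and the Akahori bracket $[\cdot,\cdot]$. In this language the hypothesis says precisely that $\phi(t) := \sum_{k \geq 1}\psi_k t^k$ solves the Maurer--Cartan equation modulo $t^{l+1}$, and what is being asked is the standard fact that the next-order obstruction is $\overline{\partial}_b$-closed. The argument is then purely formal in any DGLA and does not use the CR nature of the setup beyond those DGLA axioms.

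First I would apply $\overline{\partial}_b$ to $\sum_{m+h=l+1}[\psi_m,\psi_h]$ term by term, using the graded Leibniz rule $\overline{\partial}_b[\psi_m,\psi_h] = [\overline{\partial}_b\psi_m,\psi_h] - [\psi_m,\overline{\partial}_b\psi_h]$ (the minus sign coming from $|\psi_m|=1$). The assumption $\overline{\partial}_b\psi_1=0$ kills the boundary terms at $m=1$ and $h=1$, while for the remaining indices the recursion $\overline{\partial}_b\psi_k = -\tfrac{1}{2}\sum_{p+q=k}[\psi_p,\psi_q]$ can be substituted. The net effect is to rewrite the derivative as
\[
-\tfrac{1}{2}\sum_{a+b+c=l+1}[[\psi_a,\psi_b],\psi_c] \;+\; \tfrac{1}{2}\sum_{a+b+c=l+1}[\psi_a,[\psi_b,\psi_c]],
\]
where all indices range over $\{1,\ldots,l\}$.

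Next I would invoke the graded Jacobi identity, which on odd-degree elements takes the symmetric form $[\psi_a,[\psi_b,\psi_c]] + [\psi_b,[\psi_c,\psi_a]] + [\psi_c,[\psi_a,\psi_b]] = 0$, equivalently $[[\psi_a,\psi_b],\psi_c] = [\psi_a,[\psi_b,\psi_c]] + [\psi_b,[\psi_a,\psi_c]]$. Combined with the graded symmetry $[\psi_a,\psi_b]=[\psi_b,\psi_a]$ for odd elements, a relabelling of summation indices gives $\sum_{a+b+c=l+1}[[\psi_a,\psi_b],\psi_c] = 2\sum_{a+b+c=l+1}[\psi_a,[\psi_b,\psi_c]]$, so the expression above collapses to $-\tfrac{1}{2}\sum_{a+b+c=l+1}[\psi_a,[\psi_b,\psi_c]]$. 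Finally, summing the three cyclic permutations of Jacobi over the cyclically-symmetric index set $\{a+b+c=l+1,\,a,b,c\geq 1\}$ and relabelling forces that single triple-bracket sum to equal three times itself up to sign, hence to vanish. This yields $\overline{\partial}_b\sum_{m+h=l+1}[\psi_m,\psi_h]=0$, as required.

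The main obstacle I expect is not the algebra but the sign bookkeeping: one must confirm that the CR tangent complex equipped with $\overline{\partial}_b$ and the bracket of \cite{[AM]} really forms a DGLA in the convention actually used in Lemma~\ref{tt} — specifically, that $\overline{\partial}_b$ is a graded derivation of the Akahori bracket on degree-$1$ elements and that the graded Jacobi identity holds on the relevant bidegrees. Once those axioms are in place in the correct normalization, the computation above is a routine formal manipulation and runs exactly parallel to the classical Tian--Todorov recursion for Calabi--Yau manifolds.
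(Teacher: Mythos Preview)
Your proposal is correct but takes a genuinely different route from the paper. The paper proves the lemma by a direct local-coordinate computation: it fixes the special frame of Lemma~\ref{4.1}, writes out $\overline{\partial}_b$ and the bracket explicitly via Lemma~\ref{4.2}, expands $\overline{\partial}_b\big(\sum_{m+h=l+1}[\imath_1^{-1}\phi_m,\imath_1^{-1}\phi_h]\big)(e_{l_1},e_{l_2},e_{l_3})$ into several dozen terms (including the $\sqrt{-1}\,\zeta$-contributions coming from $[e_k,\overline{e}_i]$), and then checks by hand that the $\zeta$-terms cancel thanks to the symmetry $\alpha_k^{k_1,k_2}=\alpha_k^{k_2,k_1}$ forced by the hypothesis, while the remaining $\overline{e}_i$-terms cancel after substituting the inductive relation \eqref{eq56}. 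You instead package everything as a DGLA computation: graded Leibniz for $\overline{\partial}_b$, the recursion, and graded Jacobi with cyclic relabelling over $a+b+c=l+1$.

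What each buys: your argument is short, conceptual, and makes transparent that this is nothing but the standard ``the next Maurer--Cartan obstruction is a cocycle'' step, exactly parallel to Todorov's Lemma~1.2.5. The paper's brute-force computation is far longer but is self-contained: it does not appeal to the DGLA axioms for $(\Gamma(M,T'\otimes\wedge^\bullet({}^0T'')^*),\overline{\partial}_b,[\cdot,\cdot])$, which in the CR setting are genuinely delicate because $\overline{\partial}_b$ of a ${}^0\overline{T}''$-valued form acquires a $\mathbb{C}\zeta$-component. You correctly flag this as the real content to be checked; once the derivation and Jacobi identities are imported from Akahori's foundational papers, your formal manipulation goes through. (Minor point: your intermediate coefficient ``$-\tfrac12$'' in front of the triple-bracket sum is off by a sign/factor depending on conventions, but since the cyclic Jacobi argument kills $\sum_{a+b+c=l+1}[\psi_a,[\psi_b,\psi_c]]$ outright, the conclusion is unaffected.)
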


As Miyajima proposed in \cite{miy}, it is an interesting to present a non-trivial example of normal strongly pseudo-convex CR manifold on which the $d'd''$-lemma holds.
Moreover, we believe that it is worth studying the analogous criteria of the $\partial\bar{\partial}$-lemma for a normal strongly pseudo-convex CR manifold to the works \cite{[AT13],atr} under some suitable assumptions on the manifolds.

%In\cite{[AT13]}, D. Angella-A. Tomassini study the relations between Bott-Chern and Aeppli cohomologies and $\partial\bar{\partial}$-lemma. They proved a Fr\"olicher-type inequality for Bott-Chern cohomology and they show that the equality holds if and only if $X$ satisfies the $\partial\bar{\partial}$-lemma. So there is an interesting question: Does we can find the analogy of Angella-Tomassini's result in CR geometry? On the other hand,

In this paper, we will follow the notations and definitions in \cite{[AM]} and \cite{[LR]}. Without specially mentioned, all the subindexes are assumed to be nonnegative integers.

\section{Preliminaries}
\subsection{CR structures}
  Let $ M $ be a $ C^\infty $ orientable real odd dimensional manifold. Let $ E $ ba a subbundle of the complexified tangent bundle $ \mathbb{C}\otimes TM $ satisfying:
$$
E \cap \overline{E}=0,
$$
then $E$ is called an \emph{almost CR structure} on $M$. An almost CR structure $E$ on $M$ is \emph{integrable} if for any open set $U \subseteq M$,
$$
[\Gamma(U,~E), \Gamma(U,~E)] \subset \Gamma(U,~E).
$$

An integrable almost CR structure is referred to as a \emph{CR structure}, and a pair $(M,~E)$ consisting of a $ C^{\infty}$ manifold and a CR structure is a \emph{CR manifold}. It is well known in CR geometry that if $M$ is a real hypersurface of $X$, then $^{0}T'':=\mathbb{C}\otimes T(M)\cap T^{0,1}(X)|_{M}$ is a CR structure.

  Let $X$ be a complex manifold and let $r$ be a $ C^\infty$ exhaustion function on $X$ which is strictly plurisubharmonic except a compact subset of $X$.
Let $$ \Omega=\{x: x\in X, r(x)<0\} $$
and assume that the boundary $b\Omega$ of $\Omega$ is smooth. Then we can naturally put a CR-structure over $b\Omega$. Namely, we set
$$  ^{0}T''=\mathbb{C}\otimes T(b\Omega)\cap T^{0,1}(X)|_{b\Omega}. $$
Then we have
$$
^0T'' \cap~^0\overline{T}''=0,
$$
and
$$
[\Gamma(b\Omega,~^0T''), \Gamma(b\Omega,~^0T'')] \subset \Gamma(b\Omega,~^0T'').
$$
For our pair $ (b\Omega,\ ^0T'') $, we set a $ C^\infty $ vector bundle isomorphism
\begin{equation}\label{dec}
\mathbb{C}\otimes T(b\Omega)=\ ^0T'' +\ ^0\overline{T}'' + \mathbb{C}\zeta,
 \end{equation}
 where $ \zeta $ is a real vector field supplement to $~^0T'' +\ ^0\overline{T}'' $.
 This decomposition gives rise to a Levi form $L$ as: for $X,Y\in\ ^0T''$,
 $$L(X,Y)=-\sqrt{-1}[X,\bar Y]_{\mathbb{C}\zeta},$$
 where $[X,\bar Y]_{\mathbb{C}\zeta}$ refers to the $\mathbb{C}\zeta$-part with respect to the decomposition \eqref{dec}. A CR structure is \emph{strongly pseudoconvex}
if its Levi form is (positive or negative) definite at each point.

 Let $(M,\ ^{0}T'') $ be a compact strongly pseudoconvex CR-manifold. Furthermore we assume that $ (M,\ ^0T'') $ admits a normal vector field, namely there is a global vector field $ \zeta $ on $M$ satisfying:
\begin{align} \label{2.5}
  \zeta_p \not\in\ ^0T_p'' +~^0\overline{T}_p''
\end{align}
for every point $p$ of $M$, and
\begin{align}\label{2.6}
 [\zeta, \Gamma(M,~^0T'')] \subseteqq \Gamma(M,~^0T'').
\end{align}
This manifold $ (M,\ ^{0}T'',\zeta) $ is called a \emph{normal s.p.c. manifold}.

\begin{prop}[{\cite[Proposition 1.6.1]{[A2]}}]
An almost CR-structure $ ^\phi T'' $ corresponds to an element $ \phi $ of $ \Gamma(M, T'\otimes(^0T'')^*) $ bijectively. The correspondence is that: for $\phi\in \Gamma(M, T'\otimes(^0T'')^*)$,
$$ ^\phi T^{''}=\{X' : X'=X+\phi(X),X \in\ ^0T''\},$$
where $ T' =\ ^0\overline{T}'' + \mathbb{C}\zeta $.
\end{prop}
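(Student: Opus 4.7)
The plan is to exhibit the bijection concretely using the ambient decomposition $\mathbb{C}\otimes TM = \ ^0T'' \oplus T'$ with $T' = \ ^0\overline{T}'' \oplus \mathbb{C}\zeta$, interpreting the almost CR-structures under consideration as graphs of bundle morphisms $^0T'' \to T'$. Let $\pi_1 \colon \mathbb{C}\otimes TM \to \ ^0T''$ and $\pi_2 \colon \mathbb{C}\otimes TM \to T'$ denote the projections induced by this splitting.

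In the forward direction, given an almost CR-structure $E \subset \mathbb{C}\otimes TM$ of the same rank as $^0T''$ that is transverse to $T'$ (i.e.\ $E\cap T' = 0$, which is automatic for $E$ close to $^0T''$), I would show that $\pi_1|_E \colon E \to \ ^0T''$ is a bundle isomorphism, since it has zero kernel between bundles of equal rank. I then define $\phi := \pi_2 \circ (\pi_1|_E)^{-1} \in \Gamma(M, T' \otimes (^0T'')^*)$, and by construction every element of $E$ decomposes uniquely as $X + \phi(X)$ with $X \in\ ^0T''$, so $E = \{X + \phi(X) : X \in \ ^0T''\}$.

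In the reverse direction, given $\phi \in \Gamma(M, T'\otimes(^0T'')^*)$, I would set $^\phi T'' := \{X + \phi(X) : X \in\ ^0T''\}$. That this is a subbundle of rank equal to $\operatorname{rank}(^0T'')$ is immediate because it is the image of the bundle embedding $X \mapsto X + \phi(X)$. For the almost CR-condition $^\phi T'' \cap \overline{^\phi T''} = 0$, I would assume $X + \phi(X) = \overline{Y} + \overline{\phi(Y)}$ for some $X,Y \in \ ^0T''$ and expand both sides with respect to the triple decomposition $^0T'' \oplus\ ^0\overline{T}'' \oplus \mathbb{C}\zeta$, using that $\zeta$ is real so $\overline{\mathbb{C}\zeta} = \mathbb{C}\zeta$. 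Matching the three components yields a linear system whose only solution is $X = Y = 0$ provided $\phi$ is small enough, which is exactly the regime of interest in deformation theory.

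Finally, I would check that the two constructions are mutually inverse, which is essentially tautological: starting from $^\phi T'' = \{X + \phi(X)\}$, the restriction $\pi_1|_{^\phi T''}$ sends $X+\phi(X) \mapsto X$, so its inverse composed with $\pi_2$ recovers $\phi$; conversely, the graph produced from an $E$-derived $\phi$ is literally the set of $X+\phi(X)$, which coincides with $E$ by the definition of $\phi$. The only genuine subtlety, and the main point requiring care, is the smallness/transversality hypothesis needed for $^\phi T''\cap \overline{^\phi T''} = 0$; all remaining steps are routine bookkeeping about projections and graphs.
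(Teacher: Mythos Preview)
The paper does not give its own proof of this proposition; it merely quotes the statement from Akahori \cite{[A2]}. Your graph argument is the standard one and is correct: writing an almost CR-structure near $^0T''$ as the graph of a bundle map $^0T''\to T'$ via the splitting $\mathbb{C}\otimes TM=\ ^0T''\oplus T'$ is exactly how such parametrizations are obtained, and your identification of the transversality/smallness condition (needed both for $\pi_1|_E$ to be an isomorphism and for $^\phi T''\cap\overline{^\phi T''}=0$) is the one genuine hypothesis hidden in the statement.
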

 And we have
\begin{prop}[{\cite[Proposition 1.6.2]{[A2]}}]
An almost CR-structure $ ^\phi T'' $ is an actual CR-structure if and only if $ \phi $ satisfies the non-linear partial differential equation $ \mathcal{P}(\phi)=0.$
\end{prop}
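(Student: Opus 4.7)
The plan is to unwind the integrability condition geometrically and observe that, after projecting onto the two factors of the splitting $\mathbb{C}\otimes TM = {}^0T'' \oplus T'$, it translates into a polynomial identity in $\phi$ and its first derivatives; this identity is precisely what $\mathcal{P}(\phi)=0$ abbreviates, and both implications then follow tautologically from the definition of $\mathcal{P}$.

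First I would fix arbitrary local sections $X, Y \in \Gamma(U, {}^0T'')$. By the preceding Proposition 1.6.1, the corresponding sections of ${}^\phi T''$ are $X + \phi(X)$ and $Y + \phi(Y)$, and these pointwise span ${}^\phi T''$ as $X,Y$ vary. Hence ${}^\phi T''$ is integrable if and only if
$$
[X + \phi(X),\, Y + \phi(Y)] \;\in\; \Gamma(U, {}^\phi T'')
$$
for every such pair $X,Y$. Expanding via the Leibniz rule yields
$$
[X + \phi(X),\, Y + \phi(Y)] \;=\; [X, Y] \;+\; [X, \phi(Y)] \;-\; [Y, \phi(X)] \;+\; [\phi(X), \phi(Y)],
$$
in which only the first summand automatically lies in ${}^0T''$ by integrability of ${}^0T''$, while the remaining three are of mixed type.

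Next I would use the defining characterization of ${}^\phi T''$: writing $\pi_{{}^0T''}$ and $\pi_{T'}$ for the projections in the splitting, a vector $Z \in \mathbb{C}\otimes TM$ lies in ${}^\phi T''$ iff $\pi_{T'}(Z) = \phi(\pi_{{}^0T''}(Z))$. Therefore the integrability condition is equivalent to the vanishing of
$$
\mathcal{P}(\phi)(X,Y) \;:=\; \pi_{T'}\bigl([X + \phi(X),\, Y + \phi(Y)]\bigr) \;-\; \phi\Bigl(\pi_{{}^0T''}\bigl([X + \phi(X),\, Y + \phi(Y)]\bigr)\Bigr).
$$
I would then verify that the right-hand side is $C^\infty(U)$-bilinear and antisymmetric in $X, Y$, so that it descends to a global section $\mathcal{P}(\phi) \in \Gamma\bigl(M,\, T' \otimes \wedge^2({}^0T'')^*\bigr)$; tensoriality follows because $\phi$ is itself a $C^\infty$-linear bundle map ${}^0T'' \to T'$, so the derivative terms $(Xf)Y - (Yf)X$ produced by $[fX, Y]$ cancel pairwise between $\pi_{T'}$ and $\phi\circ\pi_{{}^0T''}$. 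Expanding further, $\mathcal{P}(\phi)$ decomposes as a linear part $\overline{\partial}_b\phi$, a quadratic part coming from $[X,\phi(Y)] - [Y,\phi(X)]$ and $[\phi(X),\phi(Y)]$, and a higher-order remainder produced by $\phi$ acting on the $\mathbb{C}\zeta$-components that these brackets generate; together these form the non-linear PDE $\mathcal{P}(\phi)=0$, and both directions of the equivalence read off directly from the definition.

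The main obstacle is controlling the $\mathbb{C}\zeta$-contributions. Since $\phi$ takes values in $T' = {}^0\overline{T}'' + \mathbb{C}\zeta$, each of the brackets $[X,\phi(Y)]$, $[Y,\phi(X)]$, and $[\phi(X),\phi(Y)]$ can produce components along every summand of the decomposition \eqref{dec}, and the iterated composition $\phi\circ\pi_{{}^0T''}$ must be tracked carefully to confirm that $\mathcal{P}$ really is polynomial of bounded degree in $\phi$. This is precisely where the normality assumptions \eqref{2.5}--\eqref{2.6} enter: the condition $[\zeta, \Gamma(M, {}^0T'')] \subseteq \Gamma(M, {}^0T'')$ (and its conjugate) keeps the projections structurally stable under the bracket, so that $\mathcal{P}(\phi)$ is well-defined and the equivalence reduces to a direct pointwise verification.
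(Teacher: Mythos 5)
The paper itself offers no proof of this proposition: it is quoted from Akahori \cite[Proposition 1.6.2]{[A2]}, with $\mathcal{P}(\phi)=\overline{\partial}_b\phi+\tfrac{1}{2}[\phi,\phi]$ defined immediately afterwards. Your direct verification is the standard argument and is essentially correct: sections of ${}^\phi T''$ are exactly the $X+\phi(X)$ with $X\in\Gamma({}^0T'')$, a vector $Z$ lies in ${}^\phi T''$ iff $\pi_{T'}(Z)=\phi(\pi_{{}^0T''}(Z))$, and your tensoriality check (the derivative term produced by replacing $X$ with $fX$ is $-((Y+\phi(Y))f)(X+\phi(X))$, which lies in ${}^\phi T''$ and is therefore annihilated by the defining equation) correctly reduces integrability to a pointwise condition. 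Two points should be sharpened. First, to actually conclude you must match your expression term by term with the paper's definitions: the $T'$-part of $[X,\phi(Y)]+[\phi(X),Y]$ minus $\phi([X,Y]_{{}^0T''})$ is $\overline{\partial}_b\phi(X,Y)$, while $[\phi(X),\phi(Y)]-\phi\bigl([X,\phi(Y)]_{{}^0T''}+[\phi(X),Y]_{{}^0T''}\bigr)$ is exactly $\tfrac{1}{2}[\phi,\phi](X,Y)$ as defined in the paper; you assert this decomposition but do not display it. Second, your ``higher-order remainder'' does not exist: normality \eqref{2.5}--\eqref{2.6} makes $T'={}^0\overline{T}''+\mathbb{C}\zeta$ involutive, so $[\phi(X),\phi(Y)]$ has no ${}^0T''$-component, $\phi$ is never applied to it, and $\mathcal{P}$ is precisely quadratic in $\phi$ --- as it must be for the identification with $\overline{\partial}_b\phi+\tfrac{1}{2}[\phi,\phi]$ to hold. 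With these two points made explicit, both implications follow at once.
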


Here $ \mathcal{P}(\psi) $ is defined as follows. For $ \psi $ in $ \Gamma(M, T'\otimes(^0T'')^*) $,
$$ \mathcal{P}(\psi)= \overline{\partial}_b\psi + \frac{1}{2}[\psi,\psi],$$
where $ \overline{\partial}_b $ means the $ T' $-valued tangential Cauchy-Riemann operator on $ M $. Here we define the \emph{tangential Cauchy-Riemann operators}
$$ \overline{\partial}_b^{(q)} : \Gamma(M,~\wedge ^q(^0T'')^*) \rightarrow \Gamma(M,~\wedge ^{q+1}(^0T'')^*) $$
by
\begin{align}
 \notag \overline{\partial}_b^{(q)}\phi(X_1, X_2,\cdot\cdot\cdot,X_{q+1})=&\sum\limits_{i=1}^{q+1}(-1)^{i-1} X_i(\phi( X_1,\cdot\cdot\cdot,\widehat{X_i}, \cdot\cdot\cdot,X_{q+1})) \\
 \notag &+ \sum\limits_{i<j}(-1)^{i+j}\phi([X_i, X_j], X_1,\cdot\cdot\cdot, \widehat{X_i}, \cdot\cdot\cdot, \widehat{X_j}, \cdot\cdot\cdot,X_{q+1}),
\end{align}
for all $ \phi \in \Gamma(M,~\wedge ^q(^0T'')^*) $ and $ X_1,\cdot\cdot\cdot,X_{q+1} \in \Gamma(M,~^0T'') $, and the Lie bracket on $ \Gamma(M,\ ^0\overline{T}''\otimes(^0T'')^*) $
as:  for $ X, Y\in \Gamma(M,\ ^0T'') $
\begin{align*}
&\ [\phi, \psi](X, Y)\\
 :=&\ [\phi(X), \psi(Y)] + [\psi(X), \phi(Y)] - \phi([X, \psi(Y)]_{^0T''} + [\psi(X), Y]_{^0T''}) - \psi([\phi(X),Y]_{^0T''} + [X, \phi(Y)]_{^0T''}).
\end{align*}

\subsection{Kuranishi family and Beltrami differentials}\label{reduction}
Here we give a rough introduction to Kuranishi family to describe the notation of deformation unobstructedness in complex geometry.

By (the proof of) Kuranishi's completeness theorem \cite{ku}, for any compact complex manifold $X_0$, there exists a complete holomorphic family
$\varpi:\mc{K}\to T$ of complex manifolds at the reference point $0\in T$ in the sense that for any differentiable family $\pi:\mc{X}\to B$ with $\pi^{-1}(s_0)=\varpi^{-1}(0)=X_0$, there is a sufficiently small neighborhood $E\subseteq B$ of $s_0$, and smooth maps $\Phi: \mathcal {X}_E\rightarrow \mathcal {K}$,  $\tau: E\rightarrow T$ with $\tau(s_0)=0$ such that the diagram commutes
$$\xymatrix{\mathcal {X}_E \ar[r]^{\Phi}\ar[d]_\pi& \mathcal {K}\ar[d]^\varpi\\
(E,s_0)\ar[r]^{\tau}  & (T,0),}$$
$\Phi$ maps $\pi^{-1}(s)$ biholomorphically onto $\varpi^{-1}(\tau(s))$ for each $s\in E$, and $$\Phi: \pi^{-1}(s_0)=X_0\rightarrow \varpi^{-1}(0)=X_0$$ is the identity map.
%$\pi:\mc{X}_E\to E$ is the differentiable family induced from $\varpi:\mc{K}\to T$ by $h$ with $\mc{X}_E=\pi^{-1}(E)$.
This family is called \emph{Kuranishi family} and constructed as follows. Let $\{\eta_\nu\}_{\nu=1}^m$ be a basis for $\mathbb{H}^{0,1}(X_0,T^{1,0}_{X_0})$, where some suitable hermitian metric is fixed on $X_0$ and $m\geq 1$; Otherwise the complex manifold $X_0$ would be \emph{rigid}, i.e., for any differentiable family $\kappa:\mc{M}\to P$ with $s_0\in P$ and $\kappa^{-1}(s_0)=X_0$, there is a neighborhood $V \subseteq P$ of $s_0$ such that $\kappa:\kappa^{-1}(V)\to V$ is trivial. Then one can construct a holomorphic family
$$\label{phi-ps-pp}\varphi(t) = \sum_{|I|=1}^{\infty}\varphi_{I}t^I:=\sum_{j=1}^{\infty}\varphi_j(t),\ I=(\imath_1,\cdots,\imath_m),\ t=(t_1,\cdots,t_m)\in \mathbb{C}^m,$$
 {for $t\in D_\rho$ a small $\rho$-disk,} of Beltrami differentials as follows:
$$\label{phi-ps-0}
 \varphi_1(t)=\sum_{\nu=1}^{m}t_\nu\eta_\nu
$$
and for $|I|\geq 2$,
$$\label{phi-ps}
  \varphi_I=\frac{1}{2}\db^*\G\sum_{J+L=I}[\varphi_J,\varphi_{L}],
$$
where $\G$ is the associated Green's operator. A \emph{Beltrami differential} of $X$ is a holomorphic
tangent bundle-valued $(0,1)$-form on $X$.
It is obvious that $\varphi(t)$ satisfies the equation
$$\varphi(t)=\varphi_1+\frac{1}{2}\db^*\G[\varphi(t),\varphi(t)].$$
Let
$$T=\{t\in D_\rho\ |\ \mathbb{H}[\varphi(t),\varphi(t)]=0 \},$$
where $\mathbb{H}$ is the associated harmonic projection.
Thus, for each $t\in T$, $\varphi(t)$ satisfies
$$\label{int}
\b{\p}\varphi(t)=\frac{1}{2}[\varphi(t),\varphi(t)],
$$
and determines a complex structure $X_t$ on the underlying differentiable manifold of $X_0$. More importantly, $\varphi(t)$ represents the complete holomorphic family $\varpi:\mc{K}\to T$ of complex manifolds. Roughly speaking, Kuranishi family $\varpi:\mc{K}\to T$ contains all sufficiently small differentiable deformations of $X_0$.
We call the analytic subset $T$ the \emph{Kuranishi space} of this Kuranishi family. Moreover, if $T=D_\rho$, the deformation of $X$ is \emph{unobstructed} (in $\varphi_1(t)$).
The deformation unobstructedness of CR-structured can be defined analogously.

\section{Twisted commutator formula on generalized complex manifolds}
In this section, we introduce a twisted commutator formula on generalized complex manifolds to prove the ``a CR-analogy of Tian-Todorov lemma" in the next section \ref{s-tt}.

First of all, let us introduce some notations on generalized complex
geometry.
Let $\check{M}$ be a smooth manifold, $T:=T_{\check{M}}$ the tangent
bundle of $\check{M}$ and $T^*:=T^*_{\check{M}}$ its cotangent
bundle. In the generalized complex geometry, for any $X,Y\in
C^\infty(T)$ and $\xi,\eta\in C^\infty(T^*)$, $T\oplus T^*$ is
endowed with a \emph{canonical} nondegenerate \emph{inner product}
given by
$$\label{cinner}
\langle
X+\xi,Y+\eta\rangle=\frac{1}{2}\big(\iota_X(\eta)+\iota_Y(\xi)\big),
$$
 where we denote by $\iota_X$ the contraction of a differential form by the vector field $X$. And there is an important canonical bracket on $T\oplus T^*$, so-called
\emph{Courant bracket}, which is defined by
\begin{equation}\label{Courant bracket}
[X+\xi,Y+\eta]=[X,Y]+L_X\eta-L_Y\xi-\frac{1}{2}d\big(\iota_X(\eta)-\iota_Y(\xi)\big).
\end{equation}
Here, we denote by $L_X$ the Lie derivative and $[\cdot,\cdot]$ on the right-hand side is the ordinary Lie
bracket of vector fields. Note that on vector fields the Courant
bracket reduces to the Lie bracket; in other words, if $pr_1:T\oplus
T^*\rightarrow T$ is the natural projection,
$$pr_1([A,B])=[pr_1(A),pr_1(B)]),$$
for any $A,B\in C^\infty(T\oplus T^*).$

A \emph{generalized almost complex structure} on $\check{M}$ is a
smooth section $J$ of the endomorphism bundle $\End(T\oplus T^*)$,
which satisfies both symplectic and complex conditions, i.e.
$J^*=-J$ and $J^2=-1$. We can show that the
obstruction to the existence of a generalized almost complex
structure is the same as that for an almost complex structure (See
\cite[Proposition 4.15]{[G]}). Hence it is obvious that
(generalized) almost complex structures only exist on the
even-dimensional manifolds. Let $E\subset(T\oplus T^*)\otimes
\mathbb{C}$ be the $+i$-eigenbundle of the generalized almost
complex structure $J$. Then if $E$ is Courant involutive, i.e.
closed under the Courant bracket (\ref{Courant bracket}), we say
that $J$ is \emph{integrable} and also a \emph{generalized complex
structure}. Note that $E$ is a maximal isotropic subbundle of
$(T\oplus T^*)\otimes \mathbb{C}$.

As observed by P. \v{S}evera-A. Weinstein \cite{[SW]}, the
Courant bracket (\ref{Courant bracket}) on $T\oplus T^*$ can be
twisted by a real, closed $3$-form $H$ on $\check{M}$ in the
following way: given $H$ as above, define another important bracket
$[\cdot,\cdot]_H$ on $T\oplus T^*$ by
$$[X+\xi,Y+\eta]_H=[X+\xi,Y+\eta]+\iota_Y\iota_X (H),$$
which is called an \emph{$H$-twisted Courant bracket}.
\begin{defi} \label{}
A generalized complex structure $J$ is said to be \emph{twisted
generalized complex} with respect to the closed $3$-form $H$ when
its $+i$-eigenbundle $E$ is involutive with respect to the
$H$-twisted Courant bracket and then the pair $(\check{M},J)$ is
called an \emph{$H$-twisted generalized complex manifold}.
\end{defi}

From now on, we consider the $H$-twisted generalized complex
manifold $(\check{M},J)$ defined as above. Postponing listing some
more notions in need, we must remark that they are not exactly the
same as the usual ones since we just define them for our
presentation below, and possibly miss their usual geometrical meaning.
The
 \emph{twisted de Rham differential} is given by
$$d_R=d+(-1)^k R\wedge\cdot,$$
where $R\in \Omega^k(\check{M},\mathbb{R})$. For any
$X\in C^\infty(T),\ \xi\in C^\infty(T^*)$ and $\alpha\in
\Omega^*(\check{M},\mathbb{C})$, a natural action of
$T\oplus T^*$ on smooth differential forms is given by
$$(X+\xi)\cdot\alpha=\iota_X(\alpha)+\xi\wedge\alpha.$$
 Actually, this action can be
considered as \lq lowest level' of a hierarchy of actions on the
bundles $T\bigoplus(\oplus_r\wedge^r T^*)$, $r=1,2,\cdots$, defined
by the similar formula
$$(X+\xi_1+\xi_2+\cdots)\cdot\alpha=\iota_X(\alpha)+\xi_1\wedge\alpha+\xi_2\wedge\alpha+\cdots,$$
for any $X\in C^\infty(T),\ \xi_1+\xi_2+\cdots\in
C^\infty(\oplus_r\wedge^r T^*)$ and $\alpha\in
\Omega^*(\check{M},\mathbb{C})$. Then in what follows we
adopt the action of $A=A_1\wedge\cdots\wedge A_k\in
C^\infty\Big(\bigwedge^k \big(T\bigoplus(\oplus_r\wedge^r
T^*)\big)\Big)$ on $\Omega^*(\check{M},\mathbb{C})$ given by
\begin{equation}\label{action}
A\cdot\alpha=(A_1\wedge\cdots\wedge A_k)\cdot\alpha\equiv A_1\cdot
A_2\cdot\cdots \cdot A_k\cdot\alpha,\qquad\text{for any $\alpha\in
\Omega^*(\check{M},\mathbb{C})$}.
\end{equation}
The \emph{generalized Schouten bracket} for $A=A_1\wedge\cdots\wedge
A_p\in C^\infty(\wedge^p (T\oplus T^*))$ and
$B=B_1\wedge\cdots\wedge B_q\in C^\infty(\wedge^q (T\oplus T^*))$ is
defined as
$$[A,B]_R=\sum_{i,j}(-1)^{i+j}[A_i,B_j]_R\wedge A_1\wedge\cdots\wedge \hat{A}_i\wedge
\cdots\wedge A_p\wedge
B_1\wedge\cdots\wedge\hat{B}_j\wedge\cdots\wedge B_q,$$
where $\hat{}$ means \lq omission', the \emph{$R$-twisted Courant bracket}
 $$[A_i,B_j]_R$$ is defined as $$[A_i,B_j]+\iota_{Y_j}\iota_{X_i}(R)$$ if
we take $A_i=X_i+\xi_i$ and $B_j=Y_j+\eta_j$, and the action of
$[A_i,B_j]_R$ comply with the principle of (\ref{action}). Here we
note that if $R$ is a $3$-form and $X+\xi$, $Y+\eta\in
C^\infty(T\oplus T^*)$, then the $R$-twisted Courant bracket
$[X+\xi,Y+\eta]_R$ still lies in $C^\infty(T\oplus T^*)$. However,
for $R$ being general, the bracket $[X+\xi,Y+\eta]_R$ doesn't lie in
$C^\infty(T\oplus T^*)$ in general since $\iota_Y\iota_X (R)$ is not
necessarily a $1$-form, but in $C^\infty(T\bigoplus(\oplus\wedge^*
T^*))$; hence this bracket still makes sense under the action
(\ref{action}).
Then we have the useful twisted commutator formula.
\begin{prop}\label{3.2}
  For any smooth
differential form $\rho$, any smooth odd-degree form $R$ and any
$A\in C^\infty(\wedge^p E^*)$, $B\in C^\infty(\wedge^q E^*)$, we
have
$$\label{Ht}
d_R(A\cdot B\cdot\rho)=(-1)^pA\cdot
d_R(B\cdot\rho)+(-1)^{(p-1)q}B\cdot
d_R(A\cdot\rho)+(-1)^{p-1}[A,B]_R\cdot\rho+(-1)^{p+q+1}A\cdot B\cdot
d_R\rho.
$$
\end{prop}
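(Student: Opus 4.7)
The formula is a higher version of the derived-bracket characterization of the ($R$-twisted) Courant bracket on $T\oplus T^*$. The plan is to induct on the total degree $p+q$, with the geometric content concentrated in the base case and the induction step being purely formal.

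For the base case $p=q=1$, I would write $A=X+\xi$ and $B=Y+\eta$ with $X,Y\in C^\infty(T)$, $\xi,\eta\in C^\infty(T^*)$; the identity specializes to
\[
d_R(A\cdot B\cdot\rho)=-A\cdot d_R(B\cdot\rho)+B\cdot d_R(A\cdot\rho)+[A,B]_R\cdot\rho-A\cdot B\cdot d_R\rho.
\]
Expand $A\cdot B\cdot\rho$ using $(X+\xi)\cdot\alpha=\iota_X\alpha+\xi\wedge\alpha$ and differentiate via $d_R=d+(-1)^{k}R\wedge$. Cartan's magic formula $L_X=d\iota_X+\iota_X d$, together with the graded Leibniz rules for $\iota_X$ and $\wedge$, then regroups the $d$-contribution on the left into precisely the four terms on the right, with the residual collapsing to $[X+\xi,Y+\eta]\cdot\rho$ via the definition (\ref{Courant bracket}). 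The $R$-contribution enters in two ways: through $(-1)^{k}R\wedge$ on the left and through the extra $\iota_Y\iota_X R$ sitting inside the $R$-twisted bracket on the right; these match by a short calculation that uses the derivation property of $\iota_X$ applied to $R\wedge\rho$.

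For the inductive step I would factor $A=A_1\wedge A'$ with $A_1\in C^\infty(T\oplus T^*)$ and $A'\in C^\infty(\wedge^{p-1}(T\oplus T^*))$, so that $A\cdot=A_1\cdot A'\cdot$ by (\ref{action}). Apply the $p=1$ case with operator $A_1$ acting on the composite forms $A'\cdot B\cdot\rho$, $B\cdot\rho$, $A'\cdot\rho$, and $\rho$, and then invoke the inductive hypothesis for the smaller pair $(A',B)$ on those same inputs. The bracket terms assemble via the graded Leibniz rule built into the Schouten bracket,
\[
[A_1\wedge A',B]_R=A_1\wedge[A',B]_R+(-1)^{(p-1)q}[A_1,B]_R\wedge A',
\]
which is precisely the definition given just before the proposition; a symmetric manipulation handles factoring $B$ rather than $A$.

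The main obstacle is sign bookkeeping. One must simultaneously track (i) the $\mathbb{Z}/2$-parity of $A\cdot$ as a graded operator on $\Omega^*(\check{M},\mathbb{C})$, (ii) the sign $(-1)^{(p-1)q}$ (and its analogues) introduced when $A_1\cdot$ is commuted past $A'\cdot$ or past $B\cdot$, and (iii) the graded signs in the Schouten bracket itself. A secondary subtlety is that, for $R$ of general odd degree, the bracket $[A_1,B_1]_R$ no longer lives in $C^\infty(T\oplus T^*)$ but in $C^\infty(T\oplus(\oplus_r\wedge^r T^*))$, so $[A,B]_R\cdot\rho$ must be interpreted through the extended hierarchy of actions in (\ref{action}); once that convention is fixed, the statement reduces to an algebraic identity at each point and no further analytic input is needed beyond the Cartan-type computation from the base case.
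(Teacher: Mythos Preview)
Your proof outline is correct and follows the standard derived-bracket approach: establish the $p=q=1$ case via Cartan's formula and the definition of the (twisted) Courant bracket, then induct using the graded Leibniz rule for the generalized Schouten bracket. The sign bookkeeping and the remark about $[A_1,B_1]_R$ landing in the enlarged hierarchy $T\oplus(\oplus_r\wedge^r T^*)$ when $R$ has degree $\neq 3$ are both handled appropriately.

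However, there is nothing to compare: the paper does not prove Proposition~\ref{3.2}. Its ``proof'' consists solely of citations to \cite[Proposition~4.2]{[LR]}, \cite[Lemma~4.24]{[G]}, \cite[(17)]{[KL]}, and \cite[Lemma~2]{[L]}. Your argument is precisely the kind of computation carried out in those references (in particular \cite{[LR]}, coauthored by one of the present authors), so in effect you have supplied what the paper chose to outsource.
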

\begin{proof}
See \cite[Proposition $4.2$]{[LR]}, \cite[Lemma $4.24$]{[G]},
\cite[($17$)]{[KL]} and \cite[Lemma $2$]{[L]}.
\end{proof}

As a direct corollary of Proposition \ref{3.2}, one has

\begin{coro}\label{H1AB2}
For any smooth differential form $\rho$, any smooth $1$-form $R$ and
any $A,B\in C^\infty(\wedge^2 E^*)$, we have
\begin{equation}\label{iH1AB2}
d_R(A\cdot B\cdot\rho)=A\cdot d_R(B\cdot\rho)+B\cdot
d_R(A\cdot\rho)-[A,B]\cdot\rho-A\cdot B\cdot d_R\rho.
\end{equation}
\end{coro}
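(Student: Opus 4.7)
The plan is to obtain the corollary by directly specializing Proposition \ref{3.2} to the case $p=q=2$ and simplifying the signs, together with the observation that the $R$-twisted bracket collapses to the ordinary Courant bracket when $R$ has degree one.

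First I would verify that Proposition \ref{3.2} is applicable: the statement assumes $R$ is of odd degree, and a $1$-form trivially satisfies this. So I am free to plug $A,B\in C^\infty(\wedge^2 E^*)$, i.e.\ $p=q=2$, into the master identity
\[
d_R(A\cdot B\cdot\rho)=(-1)^pA\cdot d_R(B\cdot\rho)+(-1)^{(p-1)q}B\cdot d_R(A\cdot\rho)+(-1)^{p-1}[A,B]_R\cdot\rho+(-1)^{p+q+1}A\cdot B\cdot d_R\rho.
\]
The sign computation is routine: $(-1)^p=(-1)^2=+1$, $(-1)^{(p-1)q}=(-1)^{2}=+1$, $(-1)^{p-1}=(-1)=-1$, and $(-1)^{p+q+1}=(-1)^{5}=-1$. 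These match the coefficients appearing in \eqref{iH1AB2} precisely.

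The one non-cosmetic point is to justify replacing $[A,B]_R$ by the untwisted bracket $[A,B]$. By the definition of the generalized Schouten bracket in terms of the $R$-twisted Courant bracket on decomposable sections $A_i=X_i+\xi_i$, $B_j=Y_j+\eta_j$, one has
\[
[A_i,B_j]_R=[A_i,B_j]+\iota_{Y_j}\iota_{X_i}(R).
\]
When $R$ is a $1$-form, $\iota_{X_i}(R)$ is already a $0$-form (a smooth function), so the further contraction $\iota_{Y_j}\iota_{X_i}(R)$ vanishes identically. Hence the twist term disappears from every summand in the Schouten bracket, giving $[A,B]_R=[A,B]$ as elements acting on forms via \eqref{action}. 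Substituting this into the specialized Proposition \ref{3.2} identity then yields \eqref{iH1AB2} exactly.

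I do not anticipate a genuine obstacle: the content of the corollary is entirely bookkeeping once Proposition \ref{3.2} is granted. The only point warranting care is the verification that the bidegree conventions and the action \eqref{action} are being applied consistently when one checks the sign in front of $[A,B]\cdot\rho$ and when one justifies that vanishing of $\iota_{Y_j}\iota_{X_i}(R)$ propagates through the full Schouten sum, but both are immediate from the definitions recalled in this section.
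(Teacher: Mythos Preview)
Your proposal is correct and matches the paper's approach exactly: the paper states the corollary as a direct consequence of Proposition \ref{3.2} without further proof, and your argument supplies precisely the sign specialization at $p=q=2$ together with the observation that the twist term $\iota_{Y_j}\iota_{X_i}(R)$ vanishes for a $1$-form $R$, so $[A,B]_R=[A,B]$.
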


\section{CR analogue of Tian-Todorov's lemma}\label{s-tt}
We now focus on the basic operators in CR geometry, whose definitions and notations conform to \cite{[AM]}.

As proved in \cite{[A1]}, if we let $(M, E)$ be an abstract strongly pseudoconvex CR-structure with $ \dim_{\mathbb{R}}M  = 2n-1 \geq 7 $, the vector bundle $ T' =\ ^0\overline{T}'' + \mathbb{C}\zeta $ will be a CR-holomorphic vector bundle. Therefore we can introduce the canonical line bundle $\wedge ^n(T')^*$ like in the complex manifold case.

Now we introduce $ d'' $ operator on $\Gamma (M, (\mathbb{C}\zeta)^*\wedge\wedge ^p(^0\overline{T}'')^*\wedge\wedge ^q(^0T'')^*)$.
Namely, for $ u $ in $ \Gamma (M, (\mathbb{C}\zeta)^*\wedge\wedge ^p(^0\overline{T}'')^*\wedge\wedge ^q(^0T'')^*),$
$$ d''u = (du)_{(\mathbb{C}\zeta)^*\wedge\wedge  ^p(^0\overline{T}'')^*\wedge\wedge ^{q+1}(^0T'')^*}, $$
where $ (du)_{(\mathbb{C}\zeta)^*\wedge\wedge  ^p(^0\overline{T}'')^*\wedge\wedge ^{q+1}(^0T'')^*} $ means the $ (\mathbb{C}\zeta)^*\wedge\wedge  ^p(^0\overline{T}'')^* $ part of $ du $ according to the vector bundle decomposition.

Similarly, for any $ u\in \Gamma (M, (\mathbb{C}\zeta)^*\wedge\wedge  ^p(^0\overline{T}'')^*\wedge\wedge ^q(^0T'')^*) $, we set
$$ d'u= (du)_{(\mathbb{C}\zeta)^*\wedge\wedge  ^{p+1}(^0\overline{T}'')^*\wedge\wedge ^q(^0T'')^*} $$
and thus,
$$ du= d''u + d'u + (du)_{\wedge^{p+1}(^0\overline{T}'')^*\bigwedge\wedge ^{q+1}(^0T'')^*}. $$
By a direct calculation, we have
$$ (du)_{\wedge^{p+1}(^0\overline{T}'')^*\wedge\wedge ^{q+1}(^0T'')^*} = -d\theta \wedge (\zeta \lrcorner u), $$
where $ \theta $ is the $1$-form defined by $ \theta|_{^0T''+ ^0\overline{T}''} =0 $ and $ \theta(\zeta)=1$. Here we denote by $\lrcorner$ the contraction operator. We will see the relation between these operators. For arbitrary $ u$ as above,
$$ du = d'u + d''u - d\theta\wedge(\zeta \lrcorner u). $$
And so
\begin{align*}
ddu =&\ d'd'u + d''d'u + d'd''u\\
 &\ - (d(d\theta\wedge(\zeta \lrcorner u)))_{(\mathbb{C}\zeta)^*\wedge\wedge  ^{p+1}(^0\overline{T}'')^*\wedge\wedge ^{q+1}(^0T'')^*}\\
 &\ + d''d''u - d\theta\wedge(\zeta \lrcorner d'u)- d'(d\theta\wedge(\zeta \lrcorner u))\\
 &\ - d\theta\wedge(\zeta \lrcorner d''u) - d''(d\theta\wedge(\zeta \lrcorner u)).
\end{align*}
By comparing the type, we have the following relations. Namely, from the part in $ (\mathbb{C}\zeta)^*\wedge\wedge  ^{p+2}(^0\overline{T}'')^*\wedge\wedge ^q(^0T'')^* $,
$$ d'd'u = 0. $$
From the part in $ (\mathbb{C}\zeta)^*\wedge\wedge  ^p(^0\overline{T}'')^*\wedge\wedge ^{q+2}(^0T'')^* $,
$$ d''d''u=0. $$
From the part in $ \wedge  ^{p+2}(^0\overline{T}'')^*\wedge\wedge ^{q+1}(^0T'')^* $,
\begin{align}\label{eq4.1}
 d\theta\wedge (\zeta \lrcorner d'u)+ d'(d\theta\wedge (\zeta \lrcorner u))=0.
\end{align}
From the part in $ \wedge  ^{p+1}(^0\overline{T}'')^*\wedge\wedge ^{q+2}(^0T'')^* $,
\begin{align}\label{eq4.2}
d\theta\wedge (\zeta \lrcorner d''u)+ d''(d\theta\wedge (\zeta \lrcorner u))=0.
\end{align}

Let $ (M,\ ^0T'') $ be a normal s.p.c. manifold with a real vector field $ \zeta $ satisfying \eqref{2.5} and \eqref{2.6} and with $ \dim_\mathbb{R}M=2n-1\geq7$. In this section, we will assume that the canonical line bundle $ K_M = \wedge^n(T')^* $ is \emph{trivial in CR-sense}, that is there exists a nowhere vanishing section $ \omega\in\Gamma(M,\wedge^n(T')^*)$ satisfying $ d''\omega=0 $.

In this paper, we will consider a bundle isomorphism
\begin{equation}\label{imath-q}
\imath_q: T'\otimes \wedge^q(^0T'')^* \rightarrow \wedge^{n-1}(T')^*\wedge\wedge^q(^0T'')^*
\end{equation}
 given by
 $$\imath_q(u)= u\lrcorner\omega.$$
Note that
$$ \imath_q(^0\overline{T}''\otimes\wedge^q(^0T'')^*) = (\mathbb{C}\zeta)^*\wedge\wedge^{n-2}(^0\overline{T}'')^*\wedge\wedge^q(^0T'')^* $$
holds.
  Now we have the Lie bracket on $ \Gamma(M,\ ^0\overline{T}''\otimes(^0T'')^*) $ is given by
\begin{align*}
&\ [\phi, \psi](X, Y)\\
 :=&\ [\phi(X), \psi(Y)] + [\psi(X), \phi(Y)] - \phi([X, \psi(Y)]_{^0T''} + [\psi(X), Y]_{^0T''}) - \psi([\phi(X),Y]_{^0T''} + [X, \phi(Y)]_{^0T''}),
\end{align*}
for $ X, Y\in \Gamma(M,\ ^0T'') $. And then a Lie bracket is induced on $ \Gamma(M, (\mathbb{C}\zeta)^*\wedge\wedge^{n-2}(^0\overline{T}'')^*\wedge(^0T'')^*) $ by
$$ [\alpha, \beta]:= \imath_2[\imath_1^{-1}\alpha, \imath_1^{-1}\beta]. $$

The main purpose of this paper is to obtain a CR-analogue of Tian-Todorov's lemma analyzing this induced Lie bracket.
We will use the following two lemmata.
\begin{lemm}[{\cite[Lemma 4.1]{[AM]}}]\label{4.1}
For any point $p\in M $, there exists a local frame $\{e_1, e_2,\cdots,e_{n-1}\} $ of $ ^0T'' $ around $ p $ satisfying
\begin{itemize}
  \item [(1)]
         $[e_i, e_j](p)=0 ~~(i,j=1,2,\cdot\cdot\cdot,n-1)$,
  \item [(2)]
          $[\overline{e}_i, e_k](p)=\sqrt{-1}\delta_{ik}\zeta_p ~~(i,k=1,2,\cdot\cdot\cdot,n-1)$.
\end{itemize}
\end{lemm}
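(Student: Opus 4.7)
The plan is a two-stage local frame construction: first a pointwise linear normalization at $p$ that installs condition~(2) at the single point $p$, then a smooth perturbation vanishing at $p$ with prescribed first derivatives that installs condition~(1) without disturbing~(2).

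For stage one, I start with any smooth local frame $\{f_1,\dots,f_{n-1}\}$ of $^0T''$ near $p$. Decomposing $[\bar f_i,f_k](p)$ along the splitting \eqref{dec}, the $\mathbb{C}\zeta$-component is exactly the Levi form evaluated at $p$, hence a definite Hermitian form on $^0T''_p$ by strong pseudoconvexity. A constant linear change of basis (combined if necessary with a sign flip of $\zeta$) diagonalizes this form to $\sqrt{-1}\delta_{ik}$. The resulting frame $f'_i$ satisfies
\[
[\bar f'_i,f'_k](p)=\sqrt{-1}\delta_{ik}\zeta_p+\alpha_{ik}+\mu_{ik},\qquad [f'_i,f'_j](p)=\gamma_{ij}\in{}^0T''_p,
\]
with $\alpha_{ik}\in{}^0T''_p$, $\mu_{ik}\in{}^0\bar T''_p$ constants (integrability of $^0T''$ forces the second bracket into $^0T''_p$). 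Taking the complex conjugate of the first identity, using that $\zeta$ is real, yields a Hermitian-reality relation coupling the components of $\alpha$ and $\mu$, of the form $\alpha_{ki}^l=-\overline{\mu_{ik}^l}$ in any basis of $^0T''_p$.

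For stage two, I seek $e_i=f'_i+\sum_l h_i^l f'_l$ with smooth $\mathbb{C}$-valued functions $h_i^l$ satisfying $h_i^l(p)=0$. The value terms drop out at $p$, so an expansion of $[e_i,e_j](p)$ and $[\bar e_i,e_k](p)$ by the Leibniz rule reduces them to the original brackets plus contributions linear in the first derivatives of $h_i^l$ in the directions $f'_j$, $\bar f'_j$, and $\zeta$. Since $\{f'_j,\bar f'_j,\zeta\}$ span $T_pM\otimes\mathbb{C}$, the one-jet of $h_i^l$ at $p$ can be realized freely by a degree-one polynomial in a local chart. Prescribing the $f'_j$-derivatives of $h_i^l$ at $p$ to cancel $\gamma_{ij}$ antisymmetrically, the $\bar f'_j$-derivatives to cancel $\alpha_{jk}$, and the $\zeta$-derivatives to vanish, then produces (1) and (2) after short bookkeeping.

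The main obstacle is that the single prescription of $\bar f'_j(h_k^l)(p)$ must cancel \emph{both} the $^0T''$- and $^0\bar T''$-components of $[\bar e_i,e_k](p)$ simultaneously. This is where the Hermitian-reality relation from stage one enters: the $^0\bar T''$-contribution of the perturbation is $-\overline{\bar f'_k(h_i^l)(p)}\,\bar f'_l(p)$, which under the prescription becomes $\overline{\alpha_{ki}^l}\,\bar f'_l(p)=-\mu_{ik}^l\,\bar f'_l(p)$, exactly cancelling the $\mu$-term. Once this compatibility is confirmed, the prescriptions for (1) and (2) concern derivatives in complementary directions ($f'_j$ vs.\ $\bar f'_j$) and do not interfere, so the modified frame $\{e_i\}$ satisfies both conditions at $p$.
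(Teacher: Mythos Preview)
The paper does not supply its own proof of this lemma; it is quoted from \cite[Lemma~4.1]{[AM]} and used as input. Your two-stage construction---first a constant linear change of frame at $p$ to diagonalize the Levi form, then a perturbation $e_i=f'_i+\sum_l h_i^l f'_l$ with $h_i^l(p)=0$ and prescribed first jet to kill the residual bracket terms---is correct and is essentially the standard argument one would expect in \cite{[AM]}. The delicate point, that the single prescription $\bar f'_j(h_k^l)(p)=-\alpha_{jk}^l$ must simultaneously cancel both the $^0T''$- and the $^0\bar T''$-components of $[\bar e_i,e_k](p)$, is exactly governed by the conjugation identity $\alpha_{ki}^l=-\overline{\mu_{ik}^l}$ you derived, and your verification of this is right. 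The prescriptions for~(1) and~(2) then live in the $f'_j$- and $\bar f'_j$-directions respectively, so they do not conflict, and any complex $1$-jet at $p$ is realizable by a smooth function.

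One small correction: the supplementary real field $\zeta$ is part of the given normal s.p.c.\ data and is not yours to flip. The sign in condition~(2) is fixed by the conventions of \cite{[AM]}, which are arranged so that the Levi form has the appropriate (negative) sign relative to this $\zeta$; so the diagonalization step goes through without any modification of $\zeta$.
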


\begin{lemm}[{\cite[Lemma 4.5]{[AM]}}] \label{4.2}
For $$ \phi = \sum\limits_{i=1}^{n-1}\sum\limits_{k=1}^{n-1} \phi^{i,k} \overline{e}_i\otimes e_k^* $$ and $$  \psi = \sum\limits_{i=1}^{n-1}\sum\limits_{k=1}^{n-1} \psi^{i,k} \overline{e}_i\otimes e_k^* \in \Gamma(M,\ ^0\overline{T}''\otimes (^0T'')^*), $$
one has
 \begin{align*}
[\phi,\psi](p)=\frac{1}{2}\sum\limits_{i=1}^{n-1}\sum\limits_{k,l=1}^{n-1}\sum\limits_{j=1}^{n-1}\{&\phi^{j,k}(p)\overline{e}_j(\psi^{i,l})(p)-\psi^{j,l}(p)\overline{e}_j(\phi^{i,k})(p)\\
 &+\psi^{j,k}(p)\overline{e}_j(\psi^{i,l})(p)- \phi^{j,l}(p)\overline{e}_j(\psi^{i,k})(p)\}(\overline{e}_i)_p\otimes(e_k^*)_p\wedge(e_l^*)_p.
\end{align*}

\end{lemm}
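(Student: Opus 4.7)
The plan is a direct local computation at the distinguished point $p$, using the defining bracket formula from Section~2 together with Lemma \ref{4.1}. Writing $\phi(e_k) = \sum_i \phi^{i,k}\overline{e}_i$ and $\psi(e_l) = \sum_i \psi^{i,l}\overline{e}_i$, I would evaluate
$$[\phi,\psi](e_k,e_l) = [\phi(e_k),\psi(e_l)] + [\psi(e_k),\phi(e_l)] - \phi([e_k,\psi(e_l)]_{{}^0T''} + [\psi(e_k),e_l]_{{}^0T''}) - \psi([\phi(e_k),e_l]_{{}^0T''} + [e_k,\phi(e_l)]_{{}^0T''})$$
at $p$, and then antisymmetrize via $[\phi,\psi](p) = \tfrac12 \sum_{k,l}[\phi,\psi](e_k,e_l)(p)\,(e_k^*)_p\wedge(e_l^*)_p$, which will supply the prefactor $\tfrac12$ appearing in the stated formula.

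My first step is to show that the four projection terms vanish at $p$. By Leibniz,
$$[e_k,\psi(e_l)] = \sum_j e_k(\psi^{j,l})\overline{e}_j + \sum_j \psi^{j,l}[e_k,\overline{e}_j].$$
Lemma \ref{4.1}(2) gives $[e_k,\overline{e}_j](p) = -\sqrt{-1}\delta_{jk}\zeta_p \in (\mathbb{C}\zeta)_p$, while the first sum lies in ${}^0\overline{T}''_p$; neither piece has any component in ${}^0T''_p$ with respect to the decomposition \eqref{dec}. Hence $[e_k,\psi(e_l)]_{{}^0T''}(p) = 0$, and since $\phi$ is $C^\infty(M)$-linear as a bundle homomorphism, $\phi([e_k,\psi(e_l)]_{{}^0T''})(p) = 0$. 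The other three projection terms disappear by the identical argument.

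For the surviving pair of brackets, Leibniz once again gives
$$[\phi(e_k),\psi(e_l)] = \sum_{i,j}\phi^{i,k}\psi^{j,l}[\overline{e}_i,\overline{e}_j] + \sum_{i,j}\phi^{i,k}\overline{e}_i(\psi^{j,l})\overline{e}_j - \sum_{i,j}\psi^{j,l}\overline{e}_j(\phi^{i,k})\overline{e}_i.$$
Taking the complex conjugate of Lemma \ref{4.1}(1) yields $[\overline{e}_i,\overline{e}_j](p) = 0$, so the first sum drops out at $p$; swapping the dummy indices $i \leftrightarrow j$ in the second sum collects everything into the coefficient of $(\overline{e}_i)_p$. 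The bracket $[\psi(e_k),\phi(e_l)](p)$ is handled identically and contributes the remaining two summands obtained from the first pair by interchanging $\phi$ and $\psi$. Combining with the vanishing of the projection terms gives the complete expression for $[\phi,\psi](e_k,e_l)(p)$, after which the antisymmetrization factor $\tfrac12$ yields the stated formula.

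The principal obstacle is bookkeeping: verifying that every one of the four projection terms truly has trivial ${}^0T''_p$-component (so that no derivative like $\overline{e}_j(\phi^{i,k})$ sneaks a spurious ${}^0T''$-piece into the final formula), and aligning the four coefficient summands with the correct signs and the correct $\phi,\psi$ pattern after relabelling indices, so that the coefficient of $(\overline{e}_i)_p\otimes(e_k^*)_p\wedge(e_l^*)_p$ appears exactly as written.
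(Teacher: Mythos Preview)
Your approach is correct and is precisely the natural direct computation one would expect. The paper itself does not supply a proof of this lemma: it is simply quoted from \cite[Lemma 4.5]{[AM]}, so there is nothing to compare against. Your argument---killing the four projection terms at $p$ via Lemma~\ref{4.1}(2) and the tensoriality of $\phi,\psi$, then expanding the two surviving Lie brackets and using Lemma~\ref{4.1}(1) to drop $[\overline{e}_i,\overline{e}_j](p)$---is exactly how the result is established in the cited reference.

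One remark: your computation of $[\psi(e_k),\phi(e_l)](p)$ yields the third summand $\psi^{j,k}(p)\,\overline{e}_j(\phi^{i,l})(p)$, not $\psi^{j,k}(p)\,\overline{e}_j(\psi^{i,l})(p)$ as printed in the statement here. That is a typographical slip in the displayed formula; the correct version (with $\phi^{i,l}$) is what your argument produces, and it is also what the paper actually uses when it applies this lemma in the proof of Lemma~\ref{TI-TO} (compare the third term $\beta^{j,k}\overline{e}_j(\alpha^{i,l})$ there). So do not try to force your computation to match the misprint.
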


From now on, we use the notation $ A^{p,q}(M) $ to denote $$ A^{p,q}(M) = \Gamma(M, (\mathbb{C}\zeta)^*\wedge\wedge^p(^0\overline{T}'')^*\wedge\wedge^q(^0T'')^*).$$

\begin{lemm} \label{TI-TO}

If $ \alpha, \beta \in A^{n-2,1}(M) $ and $ \rho \in \Gamma(M, \wedge ^{\bullet}(T')^*) $, we have
$$ [\imath_1^{-1}\alpha, \imath_1^{-1}\beta]\lrcorner\rho = \imath_1^{-1}\alpha\lrcorner d'(\imath_1^{-1}\beta\lrcorner\rho) + \imath_1^{-1}\beta\lrcorner d'(\imath_1^{-1}\alpha\lrcorner\rho) - d'(\imath_1^{-1}\beta\lrcorner \imath_1^{-1}\alpha\lrcorner\rho) - \imath_1^{-1}\beta\lrcorner(\imath_1^{-1}\alpha\lrcorner d'\rho). $$

\end{lemm}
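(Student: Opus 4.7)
My plan is to apply Corollary~\ref{H1AB2}, which was introduced in Section~3 precisely for this purpose. Since $\alpha \in A^{n-2,1}(M)$ forces $\imath_1^{-1}\alpha \in \Gamma(M,{}^0\overline{T}'' \otimes ({}^0T'')^*)$, I would first identify $\imath_1^{-1}\alpha$ with a $\wedge^2$-section by writing $\imath_1^{-1}\alpha = \sum_{i,k}\alpha^{i,k}\,\overline{e}_i\otimes e_k^*$ in the distinguished frame of Lemma~\ref{4.1} and regarding it as the section $\sum_{i,k}\alpha^{i,k}\,\overline{e}_i\wedge e_k^* \in \Gamma(M,\wedge^2((T_M\oplus T^*_M)\otimes\mathbb{C}))$. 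Its action~(\ref{action}) on a form $\rho\in\wedge^\bullet(T')^*$ is $\iota_{\overline{e}_i}(e_k^*\wedge\rho)=e_k^*\wedge\iota_{\overline{e}_i}\rho$, which coincides with the contraction $\imath_1^{-1}\alpha\lrcorner\rho$. The pairs $\overline{e}_i, e_k^*$ are mutually isotropic with respect to the generalized-geometry inner product, so the action is independent of the chosen factorization and is well defined on the $\wedge^2$-element.

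The crucial verification is that the generalized Schouten bracket of the two $\wedge^2$-sections agrees with the CR Lie bracket $[\imath_1^{-1}\alpha,\imath_1^{-1}\beta]$ from Section~\ref{s-tt}. Unpacking the Schouten formula yields a sum of Courant brackets $[\overline{e}_i,\overline{e}_j]$, $[\overline{e}_i,e_l^*]$, $[e_k^*,\overline{e}_j]$, $[e_k^*,e_l^*]$ evaluated at $p$, dressed with the wedge of the remaining factors. Lemma~\ref{4.1} kills the vector-vector and form-form parts at $p$, leaving Lie-derivative contributions that I would match, term by term, against the formula in Lemma~\ref{4.2}. I expect this sign-sensitive local comparison---reconciling the generalized Schouten convention with the Akahori--Miyajima Lie bracket convention---to be the main technical obstacle.

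With the identification established, applying Corollary~\ref{H1AB2} with $R=0$, $d_R=d$, $A=\imath_1^{-1}\alpha$, $B=\imath_1^{-1}\beta$ and rearranging yields
\begin{equation*}
[\imath_1^{-1}\alpha,\imath_1^{-1}\beta]\lrcorner\rho = \imath_1^{-1}\alpha\lrcorner d(\imath_1^{-1}\beta\lrcorner\rho) + \imath_1^{-1}\beta\lrcorner d(\imath_1^{-1}\alpha\lrcorner\rho) - d(\imath_1^{-1}\beta\lrcorner\imath_1^{-1}\alpha\lrcorner\rho) - \imath_1^{-1}\beta\lrcorner\imath_1^{-1}\alpha\lrcorner d\rho,
\end{equation*}
where in the last two terms I have used that $\imath_1^{-1}\alpha$ and $\imath_1^{-1}\beta$ commute as even-parity operators on forms (their constituents being mutually isotropic).

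Finally, to replace $d$ by $d'$ throughout, I would perform a type projection. Decomposing $\wedge^\bullet(T')^* = \bigoplus_{a,b}\wedge^a(^0\overline{T}'')^* \wedge \wedge^b(\mathbb{C}\zeta)^*$ and taking $\rho$ of pure trigrading $(p,r,0)$ with respect to $((^0\overline{T}'')^*,(\mathbb{C}\zeta)^*,(^0T'')^*)$, the left-hand side lies entirely in trigrading $(p-1,r,2)$. Using the paper's decomposition $d = d' + d'' - d\theta\wedge(\zeta\lrcorner\cdot)$, a direct type-check shows that on each of the four right-hand terms only the $d'$-piece of the inner $d$ lands in trigrading $(p-1,r,2)$, while the $d''$- and correction-pieces land in different trigradings. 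Projecting both sides onto trigrading $(p-1,r,2)$ therefore converts every $d$ on the right-hand side into $d'$ and yields the claimed identity; the general case follows by linearity in $\rho$.
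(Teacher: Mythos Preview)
Your proposal is correct and follows essentially the same route as the paper: identify $\imath_1^{-1}\alpha,\imath_1^{-1}\beta$ with $\wedge^2$-sections acting on forms, verify via Lemmas~\ref{4.1} and~\ref{4.2} that the generalized Schouten bracket agrees with the CR Lie bracket, and invoke Corollary~\ref{H1AB2}. The only difference is in handling $d$ versus $d'$: the paper simply writes the identifications $A\cdot d'(B\cdot\rho)=\imath_1^{-1}\alpha\lrcorner d'(\imath_1^{-1}\beta\lrcorner\rho)$, etc., and substitutes directly into \eqref{iH1AB2} without further comment, whereas you first apply the corollary with $R=0$, $d_R=d$, and then carry out an explicit trigrading projection to convert each $d$ to $d'$. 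Your extra step makes rigorous a passage the paper leaves implicit; both arguments are otherwise identical.
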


\begin{proof}
This lemma is a direct application of Corollary \ref{H1AB2}. We set
$$ \imath_1^{-1}\alpha= \alpha^{i,k}\overline{e}_i\otimes e_k^* , ~\imath_1^{-1}\beta= \beta^{j,l}\overline{e}_j\otimes e_l^*.$$
Then we set $ A = \alpha^{i,k} \overline{e}_i\cdot e_k^* $ and $ B = \beta^{j,l} \overline{e}_j\cdot e_k^* $. It is obvious that $ E $ in Corollary \ref{H1AB2} is taken as $ ^0T'' \otimes (^0\overline{T}'')^* $ in our case. For ease of notations, $ A= \alpha^i\cdot \overline{e}_i, ~ B= \beta^j\cdot \overline{e}_j $, here $ \alpha^i= \alpha^{i,k} e_k^*,~ \beta^j= \beta^{j,l} e_l^* $. More precisely, since\\
\begin{align}
\notag [A, B] &= [\alpha^i\cdot \overline{e}_i, \beta^j\cdot \overline{e}_j] \\
\notag &= [\alpha^i, \beta^j]\wedge \overline{e}_i\wedge \overline{e}_j +  [\overline{e}_i, \overline{e}_j]\wedge \alpha^i\wedge \beta^j - [\alpha^i, \overline{e}_j]\wedge \overline{e}_i\wedge \beta^j - [\overline{e}_i, \beta^j]\wedge \alpha^i\wedge \overline{e}_j \\
\notag &= - [\alpha^i, \overline{e}_j]\wedge \overline{e}_i\wedge \beta^j - [\overline{e}_i, \beta^j]\wedge \alpha^i\wedge \overline{e}_j \\
\notag &= \beta^j\wedge \overline{e}_j(\alpha^i)\wedge \overline{e}_i + \alpha^i\wedge \overline{e}_i(\beta^j)\wedge \overline{e}_j \\
 \notag &= \beta^{j,l}e_l^*\wedge \overline{e}_j(\alpha^{i,k})e_k^*\wedge \overline{e}_i + \alpha^{i,k}e_k^*\wedge \overline{e}_i(\beta^{j,l})e_l^*\wedge \overline{e}_j,
\end{align}
according to Lemma \ref{4.2}, we have
$$ [\imath_1^{-1}\alpha, \imath_1^{-1}\beta]= \frac{1}{2}\{\alpha^{j,k}\overline{e}_j(\beta^{i,l})\overline{e}_i\otimes e_k^*\wedge e_l^* - \beta^{j,l}\overline{e}_j(\alpha^{i,k})\overline{e}_i\otimes e_k^*\wedge e_l^* + \beta^{j,k}\overline{e}_j(\alpha^{i,l})\overline{e}_i\otimes e_k^*\wedge e_l^* - \alpha^{j,l}\overline{e}_j(\beta^{i,k})\overline{e}_i\otimes e_k^*\wedge e_l^* \}. $$
Then one has $ [A,B]\cdot\rho = [\imath_1^{-1}\alpha, \imath_1^{-1}\beta]\lrcorner \rho $. Moreover, one easily knows that
$$ A\cdot d'(B\cdot\rho) = \imath_1^{-1}\alpha \lrcorner d'(\imath_1^{-1}\beta \lrcorner \rho), $$
$$  B\cdot d'(A\cdot\rho) = \imath_1^{-1}\beta \lrcorner d'(\imath_1^{-1}\alpha \lrcorner \rho), $$
$$ d'(A\cdot B\cdot \rho) = d'(\imath_1^{-1}\beta \lrcorner \imath_1^{-1}\alpha \lrcorner \rho) ,$$
and $$ A\cdot B \cdot d'\rho = \imath_1^{-1}\beta \lrcorner (\imath_1^{-1}\alpha \lrcorner d'\rho). $$
Hence, by substituting the five equalities above into the formula \eqref{iH1AB2}, we complete our proof.
\end{proof}
So one has the obvious:
\begin{coro} [{\cite[Proposition 4.6]{[AM]}}]\label{CRTT}
If $ \alpha, \beta \in A^{n-2,1}(M) $ and satisfy $ d'\alpha = d'\beta = 0 $, then
$$ 2\imath_2 [\imath_1^{-1}\alpha, \imath_1^{-1}\beta] = - d'(\imath_1^{-1}\alpha \lrcorner \beta + \imath_1^{-1}\beta \lrcorner \alpha). $$
\end{coro}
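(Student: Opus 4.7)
My plan is to derive this corollary as a direct specialization of Lemma \ref{TI-TO} by choosing $\rho=\omega$, the distinguished nowhere vanishing CR-holomorphic section of $\wedge^{n}(T')^{*}$ whose existence is guaranteed by the triviality of $K_{M}$ in the CR-sense. The whole argument will then consist of checking that four of the terms on the right-hand side of the identity in Lemma \ref{TI-TO} collapse, and finally symmetrizing.

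First I would record the three simplifications that do all the work. By the very definition of the bundle isomorphism $\imath_{1}$ in \eqref{imath-q} we have $\imath_{1}^{-1}\alpha\lrcorner\omega=\alpha$ and $\imath_{1}^{-1}\beta\lrcorner\omega=\beta$, so the first two terms on the right of Lemma \ref{TI-TO} become $\imath_{1}^{-1}\alpha\lrcorner d'\beta$ and $\imath_{1}^{-1}\beta\lrcorner d'\alpha$, which vanish by the assumption $d'\alpha=d'\beta=0$. Next I would argue that $d'\omega=0$ automatically, purely for degree reasons: since ${}^{0}\overline{T}''$ has rank $n-1$, the decomposition of $(T')^{*}=({}^{0}\overline{T}'')^{*}\oplus(\mathbb{C}\zeta)^{*}$ forces $\omega\in\Gamma(M,(\mathbb{C}\zeta)^{*}\wedge\wedge^{n-1}({}^{0}\overline{T}'')^{*})$, and hence $d'\omega$ takes values in $(\mathbb{C}\zeta)^{*}\wedge\wedge^{n}({}^{0}\overline{T}'')^{*}=0$. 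This kills the last term on the right as well. Finally the left-hand side $[\imath_{1}^{-1}\alpha,\imath_{1}^{-1}\beta]\lrcorner\omega$ is, by the definition of the induced bracket on $A^{n-2,2}(M)$, exactly $\imath_{2}[\imath_{1}^{-1}\alpha,\imath_{1}^{-1}\beta]$.

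Combining these reductions, Lemma \ref{TI-TO} specializes to
$$\imath_{2}[\imath_{1}^{-1}\alpha,\imath_{1}^{-1}\beta]=-d'(\imath_{1}^{-1}\beta\lrcorner\alpha).$$
To recover the symmetric form with the factor $2$ stated in the corollary, I would interchange the roles of $\alpha$ and $\beta$ and invoke the fact that the bracket on $\Gamma(M,{}^{0}\overline{T}''\otimes({}^{0}T'')^{*})$ is symmetric in its two arguments, which is manifest from the explicit formula defining $[\phi,\psi]$ recalled in the preceding section (swapping $\phi\leftrightarrow\psi$ leaves the expression invariant term by term). This yields the companion identity $\imath_{2}[\imath_{1}^{-1}\alpha,\imath_{1}^{-1}\beta]=-d'(\imath_{1}^{-1}\alpha\lrcorner\beta)$, and adding the two gives the desired equality.

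There is no real obstacle here beyond bookkeeping; the whole content has already been packaged into Lemma \ref{TI-TO} via the twisted commutator formula Corollary \ref{H1AB2}. The one small point that a reader might stop on is the vanishing $d'\omega=0$, so I would make sure to spell out the degree argument above rather than appeal vaguely to ``$\omega$ being CR-holomorphic''; the CR-holomorphy assumption $d''\omega=0$ is in fact not even used in this corollary, only the ambient degree count is.
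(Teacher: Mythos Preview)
Your proposal is correct and is exactly the argument the paper intends: the paper simply writes ``So one has the obvious'' before stating the corollary, and your specialization $\rho=\omega$ in Lemma \ref{TI-TO}, together with the degree argument for $d'\omega=0$ and the symmetry of the bracket, is precisely how that obviousness unpacks.
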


\section{The deformation of CR-structures}

In this section, we first introduce the double complex $ (\mathbf{F}^{p,q}, d', d'')$ by Akahori-Miyajima \cite{[AM]}. Namely, one sets
$$
 \mathbf{F}^{p,q}= \{u : u\in \Gamma(M, (\mathbb{C}\zeta)^*\wedge\wedge^p(^0\overline{T}'')^*\wedge\wedge^q(^0T'')^*), d\theta\wedge(\zeta \lrcorner u)=0\}.
$$
Then by \eqref{eq4.1} and \eqref{eq4.2}, for $ u \in  \mathbf{F}^{p,q} $, we have $ d''u\in \mathbf{F}^{p,q+1} $ and $ d'u\in \mathbf{F}^{p+1,q}. $
Form now on we will study $(\mathbf{F}^{p,q}, d', d'')$. First, we have
$$ d'd'' + d''d'= 0\ \text{on $\mathbf{F}^{p,q}$}. $$
The next proposition describes  the relation of $T'$-valued tangential Cauchy-Riemann operator $\overline{\partial}_b$ with $d''$ on $M$. By the definition of $\overline{\partial}_b$(cf.\cite{[A1]}, \cite{[A2]}), for any $\phi\in \Gamma(M,\ ^0\overline{T}''\otimes \wedge ^q(^0T'')^*)$,\\
\begin{align}
 \notag \overline{\partial}_b\phi(e_{k_1}, e_{k_2},\cdot\cdot\cdot,e_{k_{q+1}})=&\sum\limits_{j=1}^{q+1}(-1)^{j-1}[e_{k_j}, \phi(e_{k_1},\cdot\cdot\cdot,  \widehat{e_{k_j}}, \cdot\cdot\cdot,e_{k_{q+1}})] \\
 \notag &+ \sum\limits_{j<l}(-1)^{j+1}\phi([e_{k_j}, e_{k_l}]_{^0T''}, e_{k_1},\cdot\cdot\cdot, \widehat{e_{k_j}}, \cdot\cdot\cdot, \widehat{e_{k_l}}, \cdot\cdot\cdot,e_{k_{q+1}}).
\end{align}

\begin{prop} [{\cite[Proposition 4.8]{[AM]}}]\label{5.1}
For $ \phi\in \Gamma(M,\ ^0\overline{T}''\otimes \wedge^q(^0T'')^*),$
$$ d''\circ \imath_q(\phi) = \imath_{q+1} \circ \overline{\partial}_b(\phi) + \sqrt{-1}d\theta\wedge (\zeta ~\lrcorner~ \imath_q\phi). $$
\end{prop}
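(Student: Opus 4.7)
My plan is to verify the identity pointwise using an adapted local frame. Fix an arbitrary point $p\in M$. By Lemma~\ref{4.1}, choose a local frame $\{e_1,\ldots,e_{n-1}\}$ of $^0T''$ around $p$ with $[e_i,e_j](p)=0$ and $[\overline{e}_i,e_k](p)=\sqrt{-1}\delta_{ik}\zeta_p$, and let $\{e_k^*\}$, $\{\overline{e}_i^*\}$ be the dual frames. Using CR-triviality of $K_M$, write locally $\omega=f\cdot\theta\wedge\overline{e}_1^*\wedge\cdots\wedge\overline{e}_{n-1}^*$ with $f(p)=1$ after a harmless rescaling, and expand $\phi=\sum_i\overline{e}_i\otimes\alpha^i$ with $\alpha^i\in\wedge^q(^0T'')^*$, so that $\imath_q(\phi)=\pm\sum_i\alpha^i\wedge(\overline{e}_i\lrcorner\omega)$.

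I would then compute $d(\imath_q\phi)$ via Leibniz and project onto $(\mathbb{C}\zeta)^*\wedge\wedge^{n-2}(^0\overline{T}'')^*\wedge\wedge^{q+1}(^0T'')^*$ to extract $d''(\imath_q\phi)$. The result splits naturally into two groups. Terms in which $d$ hits $\alpha^i$ contribute, after projecting onto the $(^0T'')^*$-leg, exactly the second sum $\sum_{j<l}(-1)^{j+1}\phi([e_{k_j},e_{k_l}]_{^0T''},\ldots)$ in the definition of $\overline{\partial}_b\phi$, once re-contracted with $\omega$ through $\imath_{q+1}$. For terms in which $d$ hits $\overline{e}_i\lrcorner\omega$, I would use Cartan's magic formula $d(\overline{e}_i\lrcorner\omega)=\mathcal{L}_{\overline{e}_i}\omega-\overline{e}_i\lrcorner d\omega$ together with a short type analysis: because $d''\omega=0$ by assumption and $\wedge^n(^0\overline{T}'')^*=0$ by rank reasons, the only contribution surviving the projection is driven by the structure constants $[\overline{e}_i,e_k](p)=\sqrt{-1}\delta_{ik}\zeta_p$, and it reproduces the first sum $\sum_j(-1)^{j-1}[e_{k_j},\phi(\ldots)]$ in $\overline{\partial}_b\phi$.

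The main obstacle is the $\mathbb{C}\zeta$-component of these bracket terms: since $\phi$ is $^0\overline{T}''$-valued but $[e_{k_j},\overline{e}_i]$ carries a $\zeta_p$-contribution, $\overline{\partial}_b\phi$ is $T'$-valued (not merely $^0\overline{T}''$-valued), and $\imath_{q+1}$ sends its $\zeta$-part into $\wedge^{n-1}(^0\overline{T}'')^*\wedge\wedge^{q+1}(^0T'')^*$, a type strictly different from that of $d''(\imath_q\phi)$. This excess must be the correction $\sqrt{-1}\,d\theta\wedge(\zeta\lrcorner\imath_q\phi)$, as one sees from $d\theta(\overline{e}_i,e_k)(p)=-\theta([\overline{e}_i,e_k])(p)=-\sqrt{-1}\delta_{ik}$, which produces the matching $\sqrt{-1}$ factor and the precise pairing of legs needed. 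I expect the remaining work to be careful sign bookkeeping in the explicit basis at $p$; once the coefficients match at $p$, the tensoriality of both sides together with the arbitrariness of $p$ gives the global identity.
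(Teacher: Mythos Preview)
The paper does not supply its own proof of this proposition; it is quoted without argument from \cite[Proposition 4.8]{[AM]}, so there is no in-paper proof to compare your sketch against. Your overall strategy---pointwise verification at an arbitrary $p$ using the adapted frame of Lemma~\ref{4.1}, Leibniz on $\imath_q\phi=\sum_i\alpha^i\wedge(\overline{e}_i\lrcorner\omega)$, projection to isolate $d''$, and matching against $\imath_{q+1}(\overline{\partial}_b\phi)$ plus the $d\theta$-correction---is the natural one and will go through.

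One bookkeeping point needs correcting: your assignment of the two Leibniz groups to the two sums in $\overline{\partial}_b\phi$ is reversed. At the adapted point $p$ the second sum $\sum_{j<l}\phi([e_{k_j},e_{k_l}]_{^0T''},\ldots)$ vanishes outright because $[e_i,e_j](p)=0$, so nothing needs to produce it. The piece where $d$ hits $\alpha^i$, after projecting to $\wedge^{q+1}(^0T'')^*$, yields the coefficient derivatives $e_{k_j}(\phi^{i,K_j})$, which match the \emph{derivative part} of the \emph{first} sum $[e_{k_j},\phi(\ldots)]=e_{k_j}(\phi^{i,K_j})\overline{e}_i+\phi^{i,K_j}[e_{k_j},\overline{e}_i]$. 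The remaining bracket part $\phi^{i,K_j}[e_{k_j},\overline{e}_i](p)=-\sqrt{-1}\phi^{k_j,K_j}\zeta_p$ is exactly the $\zeta$-excess you correctly isolate; under $\imath_{q+1}$ it lands in $\wedge^{n-1}(^0\overline{T}'')^*\wedge\wedge^{q+1}(^0T'')^*$ and is cancelled by $\sqrt{-1}\,d\theta\wedge(\zeta\lrcorner\imath_q\phi)$ via $d\theta(\overline{e}_i,e_k)(p)=-\sqrt{-1}\delta_{ik}$, as you note. With this reassignment the remaining sign check is routine.
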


\begin{prop} [{\cite[Proposition 4.9]{[AM]}}]\label{5.2}
If $ \alpha,\beta\in \mathbf{F}^{n-2,1}$, then $ \imath_1^{-1}\alpha \lrcorner \beta \in \mathbf{F}^{n-3,2}.$
\end{prop}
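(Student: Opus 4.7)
The plan is to reduce the claim to a pointwise algebraic identity in the local frame provided by Lemma \ref{4.1}. At an arbitrary point $p\in M$, I would fix a frame $\{e_i\}_{i=1}^{n-1}$ of $\ ^0T''$ as in that lemma and write the trivializing section as $\omega = \theta\wedge\omega'$ with $\omega'\in\wedge^{n-1}(\ ^0\overline{T}'')^*$ nowhere vanishing. Using Lemma \ref{4.1}(2), the normality condition on $\zeta$, and integrability of $\ ^0T''$, the Cartan formula gives
\[
 d\theta \;=\; -\sqrt{-1}\sum_{j=1}^{n-1}\overline{e}_j^*\wedge e_j^*\qquad\text{at } p.
\]

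Next, I would write $\imath_1^{-1}\alpha=\sum_{i,k}\alpha^{i,k}\overline{e}_i\otimes e_k^*$, expand $\alpha=\imath_1(\imath_1^{-1}\alpha)$, and compute $d\theta\wedge(\zeta\lrcorner\alpha)$ with the elementary identity $\overline{e}_j^*\wedge(\overline{e}_i\lrcorner\omega') = \delta_{ij}\omega'$, immediate from $\overline{e}_j^*\wedge\omega'=0$ and the Leibniz rule for $\iota_{\overline{e}_j}$. This shows that the defining condition of $\mathbf{F}^{n-2,1}$ is equivalent, at $p$, to
\[
 \sum_{i,k}\alpha^{i,k}\, e_i^*\wedge e_k^* \;=\; 0,
\]
that is, to the pointwise symmetry $\alpha^{i,k}(p)=\alpha^{k,i}(p)$. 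The analogous statement holds for $\beta$.

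Finally, I would expand $\imath_1^{-1}\alpha\lrcorner\beta$ in the frame (noting that it carries a single $\theta$-factor), apply $\zeta\lrcorner$ to strip it off, and wedge with $d\theta$. The iterated identity $\overline{e}_j^*\wedge(\overline{e}_p\lrcorner\overline{e}_i\lrcorner\omega') = \delta_{jp}\,(\overline{e}_i\lrcorner\omega') - \delta_{ij}\,(\overline{e}_p\lrcorner\omega')$ decomposes the outcome into two summands, one factoring through $\sum_{p,q}\beta^{p,q}e_p^*\wedge e_q^*$ and the other through $\sum_{i,k}\alpha^{i,k}e_i^*\wedge e_k^*$. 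By the symmetry characterization just established, both factors vanish, so $d\theta\wedge(\zeta\lrcorner(\imath_1^{-1}\alpha\lrcorner\beta))=0$ at $p$; since $p$ was arbitrary, $\imath_1^{-1}\alpha\lrcorner\beta\in\mathbf{F}^{n-3,2}$.

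The principal obstacle will be sign bookkeeping: moving $1$-forms past higher-degree exterior forms in $\wedge^{\bullet}(\ ^0\overline{T}'')^*$ and reconciling the generalized-complex ``$\cdot$''-action employed in the proof of Lemma \ref{TI-TO} with the direct $\imath_q$-contraction convention used here. The argument is otherwise pointwise and purely algebraic, requiring no analytic input.
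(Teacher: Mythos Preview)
Your argument is correct. The paper does not supply its own proof of this proposition; it merely cites \cite[Proposition 4.9]{[AM]}, so there is no in-paper argument to compare against. Your pointwise reduction via the frame of Lemma~\ref{4.1}, together with the characterization of $\mathbf{F}^{n-2,1}$ by the symmetry $\alpha^{i,k}(p)=\alpha^{k,i}(p)$, is exactly the kind of direct computation one expects here, and indeed that symmetry is tacitly invoked later in the paper (see the line $\alpha_k^{k_1,k_2}=\alpha_k^{k_2,k_1}$ in the proof of Lemma~\ref{5.4}). The two contraction identities you isolate, $\overline{e}_j^*\wedge(\overline{e}_i\lrcorner\omega')=\delta_{ij}\omega'$ and its iterate, make the final step transparent: after wedging with $d\theta$ the result splits into a piece proportional to $\sum_{i,k}\alpha^{i,k}e_i^*\wedge e_k^*$ and one proportional to $\sum_{p,q}\beta^{p,q}e_p^*\wedge e_q^*$, each of which vanishes. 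The only care required, as you note, is sign bookkeeping and the order in which the $(^0T'')^*$-factor attaches under the $\imath_q$ convention, but these affect only overall signs and not the vanishing.
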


Let $ \mathbf{Z}^q $ be a subspace of $ \mathbf{F}^{n-2,q} $ given by
\begin{equation}\label{bf-z}
\mathbf{Z}^q= \{ \alpha\in \mathbf{F}^{n-2,q}:  d'\alpha= 0\}.
\end{equation}
 Obviously, $ d'' \mathbf{Z}^q \subset \mathbf{Z}^{q+1} $.

\begin{prop} [{\cite[Proposition 7.1]{[AM]}}]\label{5.3}
If $ \alpha\in \mathbf{Z}^1 $, then $ \imath_2\mathcal{P}(\imath_1^{-1}\alpha) \in \mathbf{Z}^2 $.
\end{prop}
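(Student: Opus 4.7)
The plan is to rewrite $\imath_2\mathcal{P}(\imath_1^{-1}\alpha)$ as an expression built out of $d'$ and $d''$ applied to objects already known to live in the bicomplex $\mathbf{F}^{\bullet,\bullet}$, and then apply the standard identities $d'd'=0$, $d'd''+d''d'=0$.

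First I would unfold the definition $\mathcal{P}(\phi)=\overline{\partial}_b\phi+\tfrac{1}{2}[\phi,\phi]$ with $\phi=\imath_1^{-1}\alpha$ and feed the two summands into the isomorphism $\imath_2$. For the $\overline{\partial}_b$-summand, Proposition \ref{5.1} gives
$$d''\alpha = \imath_2\overline{\partial}_b(\imath_1^{-1}\alpha) + \sqrt{-1}\, d\theta\wedge(\zeta\lrcorner\alpha),$$
and since $\alpha\in\mathbf{F}^{n-2,1}$ by hypothesis, the boundary term $d\theta\wedge(\zeta\lrcorner\alpha)$ vanishes, so $\imath_2\overline{\partial}_b(\imath_1^{-1}\alpha)=d''\alpha$. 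For the bracket summand, the assumption $\alpha\in\mathbf{Z}^1$ means $d'\alpha=0$, so I can apply Corollary \ref{CRTT} with $\beta=\alpha$ to obtain
$$\tfrac{1}{2}\imath_2[\imath_1^{-1}\alpha,\imath_1^{-1}\alpha] = -\tfrac{1}{2}d'(\imath_1^{-1}\alpha\lrcorner\alpha).$$
Adding the two identities produces the clean formula
$$\imath_2\mathcal{P}(\imath_1^{-1}\alpha) = d''\alpha - \tfrac{1}{2}d'(\imath_1^{-1}\alpha\lrcorner\alpha).$$

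Next I would verify membership in $\mathbf{F}^{n-2,2}$. The term $d''\alpha$ lies in $\mathbf{F}^{n-2,2}$ because $\alpha\in\mathbf{F}^{n-2,1}$ and $d''$ sends $\mathbf{F}^{p,q}$ into $\mathbf{F}^{p,q+1}$ (as observed right after the definition of $\mathbf{F}^{p,q}$). For the other term, Proposition \ref{5.2} places $\imath_1^{-1}\alpha\lrcorner\alpha$ in $\mathbf{F}^{n-3,2}$, and then $d'$ raises the first index by one, giving $d'(\imath_1^{-1}\alpha\lrcorner\alpha)\in\mathbf{F}^{n-2,2}$.

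Finally, I would apply $d'$ to the expression. Using the anticommutation relation $d'd''+d''d'=0$ on $\mathbf{F}^{\bullet,\bullet}$, the first term becomes $d'd''\alpha=-d''d'\alpha=0$, since $d'\alpha=0$ by hypothesis; the second term is $d'd'(\,\cdot\,)=0$ because $d'd'=0$. Hence $d'\bigl(\imath_2\mathcal{P}(\imath_1^{-1}\alpha)\bigr)=0$ and the proposition follows. No real obstacle arises here: the argument is essentially a two-line calculation once the CR Tian--Todorov lemma (Corollary \ref{CRTT}) and the dictionary between $\overline{\partial}_b$ and $d''$ (Proposition \ref{5.1}) are in hand; the only point requiring care is tracking the bidegrees to ensure every auxiliary form actually lives in $\mathbf{F}^{\bullet,\bullet}$, which is precisely what Proposition \ref{5.2} guarantees.
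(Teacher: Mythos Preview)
Your proof is correct and follows essentially the same route as the paper's: both derive the identity $\imath_2\mathcal{P}(\imath_1^{-1}\alpha)=d''\alpha-\tfrac{1}{2}d'(\imath_1^{-1}\alpha\lrcorner\alpha)$ via Proposition~\ref{5.1} and Corollary~\ref{CRTT}, then kill the $d'$-image using $d'd'=0$ and $d'd''=-d''d'$. Your version is in fact slightly more thorough, since you explicitly verify membership in $\mathbf{F}^{n-2,2}$ via Proposition~\ref{5.2}, a step the paper leaves implicit.
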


\begin{proof}
By the definition of $ \mathcal{P}(\psi)$,
$$ \mathcal{P}(\imath_1^{-1}\alpha)= \overline{\partial}_b(\imath_1^{-1}\alpha) + \frac{1}{2}[\imath_1^{-1}\alpha, \imath_1^{-1}\alpha]. $$
So
\begin{align}
 \notag \imath_2 \mathcal{P}(\imath_1^{-1}\alpha) &= \imath_2\overline{\partial}_b(\imath_1^{-1}\alpha) + \imath_2\frac{1}{2}[\imath_1^{-1}\alpha, \imath_1^{-1}\alpha]\\
\notag &=d''\alpha - \frac{1}{2}d'(\imath_1^{-1}\alpha \lrcorner \alpha),
\end{align}
where the last equality follows from Proposition \ref{5.1} and Corollary \ref{CRTT}. Then we have
$$ d' \imath_2 \mathcal{P}(\imath_1^{-1}\alpha) = d'd''\alpha = -d''d'\alpha = 0.$$
\end{proof}

In Tian-Todorov's approach, $ \partial\overline{\partial}$-lemma for a compact K\"ahler manifold plays an essential role. We call the $ (\mathbf{F}^{p,q}, d', d'')$-version of $ \partial\overline{\partial}$-lemma the \emph{$d'd''$-lemma}. That is,\\

$d'd''$-$\mathbf{LEMMA}$. If $ \phi\in \mathbf{F}^{p,q} $ is $d''$-closed and $d'$-exact, or $ d'$-closed and $ d''$-exact, then it is $d'd''$-exact.\\

We use the notation
 \begin{equation} \label{assume}
 \mathbf{J}^{n-2,q} = (\ker d''\cap d'\mathbf{F}^{n-3,q})/ (d''\mathbf{F}^{n-2,q-1} \cap d'\mathbf{F}^{n-3,q}) ~~(2\leq q\leq n-1).
 \end{equation}
It is clear from the definition of $\mathbf{J}^{n-2,q}$ that if $d'd''$-lemma holds, then $\mathbf{J}^{n-2,q}=0~~(2\leq q \leq n-1).$
The next lemma, an analogy of \cite[Lemma 1.2.5]{[T1]}, is crucial for the proof  of the main theorem and distinguishes our proof from that of Akahori-Miyajima \cite{[AM]}.

\begin{lemm} \label{5.4}
Assume that $ \phi_k \in \mathbf{Z}^1$ and satisfy
$$ \overline{\partial}_b(\imath_1^{-1}\phi_k) = -\frac{1}{2} \sum\limits_{m+h=k} [\imath_1^{-1}\phi_m, \imath_1^{-1}\phi_h],$$
for any $k=2,\cdot\cdot\cdot,l$ and
$$ \overline{\partial}_b(\imath_1^{-1}\phi_1)=0.$$
Then one has
$$ \overline{\partial}_b(\sum\limits_{m+h=l+1} [\imath_1^{-1}\phi_m, \imath_1^{-1}\phi_h]) = 0. $$
\end{lemm}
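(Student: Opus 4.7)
The plan is to run the standard differential graded Lie algebra (DGLA) argument, transplanted to the CR setting, exactly as in Tian's original proof \cite{[T]} for the complex case. Two structural identities on $\Gamma(M,\,{}^0\bar T''\otimes\wedge^\bullet({}^0T'')^*)$ drive the argument: the graded Leibniz rule
$$\overline{\partial}_b[\psi,\chi]=[\overline{\partial}_b\psi,\chi]-[\psi,\overline{\partial}_b\chi]\quad\text{for $\psi,\chi$ of tangential degree $1$,}$$
and the graded Jacobi identity for the bracket on $\Gamma(M,\,{}^0\bar T''\otimes\wedge^\bullet({}^0T'')^*)$ from Section \ref{s-tt}. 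Both should be either already recorded in \cite{[AM],[A2]} or verifiable directly from the intrinsic definitions using Lemma \ref{4.2} and the distinguished local frame of Lemma \ref{4.1}; I would first confirm them (and pin down sign conventions) before proceeding.

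With these identities in hand, I would apply the graded Leibniz rule termwise,
$$\overline{\partial}_b[\imath_1^{-1}\phi_m,\imath_1^{-1}\phi_h]=[\overline{\partial}_b\imath_1^{-1}\phi_m,\imath_1^{-1}\phi_h]-[\imath_1^{-1}\phi_m,\overline{\partial}_b\imath_1^{-1}\phi_h],$$
and sum over $m+h=l+1$. Exploiting the (graded) symmetry $[\psi,\chi]=[\chi,\psi]$ on degree-$1$ elements, the two groups of terms combine so that the sum reduces, up to an overall factor, to $\sum_{m+h=l+1}[\overline{\partial}_b\imath_1^{-1}\phi_m,\imath_1^{-1}\phi_h]$.

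Next, I would substitute the induction hypothesis: for $m\geq 2$,
$$\overline{\partial}_b\imath_1^{-1}\phi_m = -\tfrac{1}{2}\sum_{a+b=m}[\imath_1^{-1}\phi_a,\imath_1^{-1}\phi_b],$$
while $\overline{\partial}_b\imath_1^{-1}\phi_1=0$ kills the $m=1$ and $h=1$ contributions. What remains is a triple sum
$$-\tfrac{1}{2}\sum_{\substack{a+b+h=l+1\\ a,b,h\geq 1}}\bigl[[\imath_1^{-1}\phi_a,\imath_1^{-1}\phi_b],\imath_1^{-1}\phi_h\bigr]$$
(after symmetrizing the indices). Applying the graded Jacobi identity to each cyclic triple $(a,b,h)$, all such triple brackets cancel, yielding $\overline{\partial}_b\bigl(\sum_{m+h=l+1}[\imath_1^{-1}\phi_m,\imath_1^{-1}\phi_h]\bigr)=0$, as required.

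The main obstacle is the bookkeeping of signs and multiplicities: the bracket on degree-$1$ elements is symmetric, so ordered pairs $(m,h)$ and $(h,m)$ contribute identically, while the unordered triples $\{a,b,h\}$ appear with various multiplicities, and the Jacobi cancellation must be applied after appropriate symmetrization. A secondary issue is that, unlike in the K\"ahler/complex setting, the graded Leibniz rule for $\overline{\partial}_b$ against this CR-induced bracket is not automatic from general DGLA theory; if it is not already in \cite{[AM]}, I would prove it by the same type of local computation that underlies Lemma \ref{4.2}, paralleling the proof of the twisted commutator formula in Section 3.
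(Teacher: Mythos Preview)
Your proposal is correct in spirit and would yield a valid proof, but it is \emph{not} the route the paper takes. The paper proves Lemma \ref{5.4} by a direct, brute-force local computation in the distinguished frame of Lemma \ref{4.1}: it writes the $\imath_1^{-1}\phi_k$ in coordinates, expands the bracket via Lemma \ref{4.2}, computes $\overline{\partial}_b$ of the sum on three test vectors $(e_{l_1},e_{l_2},e_{l_3})$, and then checks by hand that every term cancels --- first the $\zeta$-terms (using the symmetry $\alpha_k^{k_1,k_2}=\alpha_k^{k_2,k_1}$ extracted from the hypothesis), then the ${}^0\overline{T}''$-terms (by substituting \eqref{eq56} and matching pairs). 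No abstract Leibniz rule or Jacobi identity is invoked.

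Your DGLA argument is cleaner and conceptually the ``right'' proof, and it is exactly what Tian and Todorov do in the complex case. What it buys is transparency: once the Leibniz and Jacobi identities are in place, the lemma is a two-line formal consequence. The cost is precisely the point you flag at the end: in the CR setting $\overline{\partial}_b$ sends ${}^0\overline{T}''$-valued forms to $T'$-valued forms (there is a genuine $\zeta$-component, visible in \eqref{eq54}), so the Leibniz rule must be stated and proved for the bracket on $T'$-valued forms, not just on ${}^0\overline{T}''$-valued ones. This is available in Akahori's foundational work (and is implicitly what underlies the integrability condition $\mathcal{P}(\phi)=0$), but it is an extra input. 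The paper's pedestrian computation avoids this dependency at the price of several pages of index-chasing; in effect, the authors are re-deriving the relevant instances of Leibniz and Jacobi inside this one calculation rather than isolating them as lemmas.
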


\begin{proof} Compare \cite[Lemma 1.2.5]{[T1]} and also \cite[Lemma 4.2]{lry}.

Let $ \imath_1^{-1}\phi_i = \alpha_i = \alpha_i^{j,k}~\overline{e}_j\otimes e_k^* $.
By the definition of $ \overline{\partial}_b $ and Lemma \ref{4.1},
\begin{equation}\label{eq54}
\begin{aligned}
\overline{\partial}_b(\alpha_k)(e_{k_1}, e_{k_2}) &=[e_{k_1}, \alpha_k(e_{k_2})] - [e_{k_2}, \alpha_k(e_{k_1})] - \alpha_k([e_{k_1},e_{k_2}]_{^0T^{''}})  \\
 &=[e_{k_1}, \alpha_k^{i,k_2}\overline{e}_i] - [e_{k_2}, \alpha_k^{i,k_1}\overline{e}_i]\\
 &= e_{k_1}(\alpha_k^{i,k_2})\overline{e}_i - \alpha_k^{i,k_2} \sqrt{-1}\delta_{k_1,i}\zeta- e_{k_2}(\alpha_k^{i,k_1})\overline{e}_i + \alpha_k^{i,k_1} \sqrt{-1}\delta_{k_2,i}\zeta\\
 &= e_{k_1}(\alpha_k^{i,k_2})\overline{e}_i - e_{k_2}(\alpha_k^{i,k_1})\overline{e}_i - \alpha_k^{k_1,k_2} \sqrt{-1}\zeta + \alpha_k^{k_2,k_1} \sqrt{-1}\zeta.
\end{aligned}
\end{equation}
According to Lemma \ref{4.2}, one has
\begin{align*}
[\imath_1^{-1}\phi_m, \imath_1^{-1}\phi_h] &=  [\alpha_m^{s,t}~\overline{e}_s\otimes e_t^*, \alpha_h^{s,t}~\overline{e}_s\otimes e_t^*]\\
 &=\frac{1}{2}\{\alpha_m^{s,k_1}~\overline{e}_s(\alpha_h^{s,k_2}) - \alpha_h^{s,k_2}~\overline{e}_s(\alpha_m^{s,k_1}) + \alpha_h^{s,k_1}~\overline{e}_s(\alpha_m^{s,k_2}) - \alpha_m^{s,k_2}~\overline{e}_s(\alpha_h^{s,k_1})\} \overline{e}_i\otimes e_{k_1}^*\wedge e_{k_2}^*.
\end{align*}
So we have
\begin{equation}\label{eq55}
\begin{aligned}
 -\frac{1}{2} \sum\limits_{m+h=k}[\imath_1^{-1}\phi_m, \imath_1^{-1}\phi_h]=-\frac{1}{4}\{&\sum\limits_{m+h=k}\alpha_m^{s,k_1}~\overline{e}_s(\alpha_h^{s,k_2}) - \sum\limits_{m+h=k}\alpha_h^{s,k_2}~\overline{e}_s(\alpha_m^{s,k_1}) \\
 &\ + \sum\limits_{m+h=k}\alpha_h^{s,k_1}~\overline{e}_s(\alpha_m^{s,k_2}) - \sum\limits_{m+h=k}\alpha_m^{s,k_2}~\overline{e}_s(\alpha_h^{s,k_1})\} \overline{e}_i\otimes e_{k_1}^*\wedge e_{k_2}^*.
\end{aligned}
\end{equation}
By assumption, combining \eqref{eq54} with \eqref{eq55} yields that for $ k= 2,\cdot\cdot\cdot,l,$
\begin{equation}\label{eq56}
\begin{aligned}
e_{k_1}(\alpha_k^{i,k_2}) - e_{k_2}(\alpha_k^{i,k_1})=\ -\frac{1}{4}\{&\sum\limits_{m+h=k}\alpha_m^{s,k_1}~\overline{e}_s(\alpha_h^{s,k_2}) - \sum\limits_{m+h=k}\alpha_h^{s,k_2}~\overline{e}_s(\alpha_m^{s,k_1}) \\
 &\ + \sum\limits_{m+h=k}\alpha_h^{s,k_1}~\overline{e}_s(\alpha_m^{s,k_2}) - \sum\limits_{m+h=k}\alpha_m^{s,k_2}~\overline{e}_s(\alpha_h^{s,k_1})\},
\end{aligned}
\end{equation}
and
$$
\alpha_k^{k_1,k_2} = \alpha_k^{k_2,k_1}.
$$
Similarly,
\begin{equation}\label{eq58}
\begin{aligned}
 \sum\limits_{m+h=l+1}[\imath_1^{-1}\phi_m, \imath_1^{-1}\phi_h]=\ \frac{1}{2}\{&\sum\limits_{m+h=l+1}\alpha_m^{s,k_1}~\overline{e}_s(\alpha_h^{s,k_2}) - \sum\limits_{m+h=l+1}\alpha_h^{s,k_2}~\overline{e}_s(\alpha_m^{s,k_1}) \\
&\ + \sum\limits_{m+h=l+1}\alpha_h^{s,k_1}~\overline{e}_s(\alpha_m^{s,k_2}) - \sum\limits_{m+h=l+1}\alpha_m^{s,k_2}~\overline{e}_s(\alpha_h^{s,k_1})\} \overline{e}_i\otimes e_{k_1}^*\wedge e_{k_2}^*.
\end{aligned}
\end{equation}
For the first term on the RHS of \eqref{eq58}, one has
\begin{align*}
   \ &\overline{\partial}_b(\sum\limits_{m+h=l+1} \alpha_m^{s,k_1}\overline{e}_s(\alpha_h^{i,k_2})\overline{e}_i\otimes e_{k_1}^*\wedge e_{k_2}^*)(e_{l_1}, e_{l_2}, e_{l_3}) \\
 =\ &[e_{l_1}, \sum\limits_{m+h=l+1} \alpha_m^{s,l_2}\overline{e}_s(\alpha_h^{i,l_3})\overline{e}_i]
  - [e_{l_2}, \sum\limits_{m+h=l+1} \alpha_m^{s,l_1}\overline{e}_s(\alpha_h^{i,l_3})\overline{e}_i]
+ [e_{l_3}, \sum\limits_{m+h=l+1} \alpha_m^{s,l_1}\overline{e}_s(\alpha_h^{i,l_2})\overline{e}_i]\\
 =\ &\quad e_{l_1}(\sum\limits_{m+h=l+1} \alpha_m^{s,l_2}\overline{e}_s(\alpha_h^{i,l_3}))\overline{e}_i - \sum\limits_{m+h=l+1} \alpha_m^{s,l_2}\overline{e}_s(\alpha_h^{l_1,l_3})\sqrt{-1}\zeta \\
 \ &- e_{l_2}(\sum\limits_{m+h=l+1} \alpha_m^{s,l_1}\overline{e}_s(\alpha_h^{i,l_3}))\overline{e}_i
 + \sum\limits_{m+h=l+1} \alpha_m^{s,l_1}\overline{e}_s(\alpha_h^{l_2,l_3})\sqrt{-1}\zeta\\
  \ &+ e_{l_3}(\sum\limits_{m+h=l+1} \alpha_m^{s,l_1}\overline{e}_s(\alpha_h^{i,l_2}))\overline{e}_i
 - \sum\limits_{m+h=l+1} \alpha_m^{s,l_1}\overline{e}_s(\alpha_h^{l_3,l_2})\sqrt{-1}\zeta.
\end{align*}

%$$\notag\overline{\partial}_b(\sum\limits_{m+h=l+1} \alpha_m^{s,k_1}\overline{e}_s(\alpha_h^{i,k_2})\overline{e}_i\otimes e_{k_1}^*\wedge e_{k_2}^*)(e_{l_1}, e_{l_2}, e_{l_3})$$
%
% $= \{[e_{l_1}, \sum\limits_{m+h=l+1} \alpha_m^{s,l_2}\overline{e}_s(\alpha_h^{i,l_3})\overline{e}_i]
%  - [e_{l_2}, \sum\limits_{m+h=l+1} \alpha_m^{s,l_1}\overline{e}_s(\alpha_h^{i,l_3})\overline{e}_i]
%\notag+ [e_{l_3}, \sum\limits_{m+h=l+1} \alpha_m^{s,l_1}\overline{e}_s(\alpha_h^{i,l_2})\overline{e}_i]\}$\\
%
%$\notag= +e_{l_1}[\sum\limits_{m+h=l+1} \alpha_m^{s,l_2}\overline{e}_s(\alpha_h^{i,l_3})]\overline{e}_i - \sum\limits_{m+h=l+1} \alpha_m^{s,l_2}\overline{e}_s(\alpha_h^{l_1,l_3})\sqrt{-1}\zeta $\\
%
%$\quad \notag- e_{l_2}[\sum\limits_{m+h=l+1} \alpha_m^{s,l_1}\overline{e}_s(\alpha_h^{i,l_3})]\overline{e}_i
% + \sum\limits_{m+h=l+1} \alpha_m^{s,l_1}\overline{e}_s(\alpha_h^{l_2,l_3})\sqrt{-1}\zeta $\\
%
%$\quad \notag+ e_{l_3}[\sum\limits_{m+h=l+1} \alpha_m^{s,l_1}\overline{e}_s(\alpha_h^{i,l_2})]\overline{e}_i
% - \sum\limits_{m+h=l+1} \alpha_m^{s,l_1}\overline{e}_s(\alpha_h^{l_3,l_2})\sqrt{-1}\zeta.$\\
So we have
\begin{align*}
&2\overline{\partial}_b(\sum\limits_{m+h=l+1} [\imath_1^{-1}\phi_m, \imath_1^{-1}\phi_h])(e_{l_1}, e_{l_2}, e_{l_3})\\
=\ &\quad  e_{l_1}(\sum\limits_{m+h=l+1} \alpha_m^{s,l_2}\overline{e}_s(\alpha_h^{i,l_3}))\overline{e}_i - \sum\limits_{m+h=l+1} \alpha_m^{s,l_2}\overline{e}_s(\alpha_h^{l_1,l_3})\sqrt{-1}\zeta \\
\ & -e_{l_2}(\sum\limits_{m+h=l+1} \alpha_m^{s,l_1}\overline{e}_s(\alpha_h^{i,l_3}))\overline{e}_i + \sum\limits_{m+h=l+1} \alpha_m^{s,l_1}\overline{e}_s(\alpha_h^{l_2,l_3})\sqrt{-1}\zeta \\
 \ &+e_{l_3}(\sum\limits_{m+h=l+1} \alpha_m^{s,l_1}\overline{e}_s(\alpha_h^{i,l_2}))\overline{e}_i - \sum\limits_{m+h=l+1} \alpha_m^{s,l_1}\overline{e}_s(\alpha_h^{l_3,l_2})\sqrt{-1}\zeta \\
 \ &-e_{l_1}(\sum\limits_{m+h=l+1} \alpha_h^{s,l_3}\overline{e}_s(\alpha_m^{i,l_2}))\overline{e}_i + \sum\limits_{m+h=l+1} \alpha_h^{s,l_3}\overline{e}_s(\alpha_m^{l_1,l_2})\sqrt{-1}\zeta \\
 \ &+e_{l_2}(\sum\limits_{m+h=l+1} \alpha_h^{s,l_3}\overline{e}_s(\alpha_m^{i,l_1}))\overline{e}_i - \sum\limits_{m+h=l+1} \alpha_h^{s,l_3}\overline{e}_s(\alpha_m^{l_2,l_1})\sqrt{-1}\zeta \\
 \ &-e_{l_3}(\sum\limits_{m+h=l+1} \alpha_h^{s,l_2}\overline{e}_s(\alpha_m^{i,l_1}))\overline{e}_i + \sum\limits_{m+h=l+1} \alpha_h^{s,l_2}\overline{e}_s(\alpha_m^{l_3,l_1})\sqrt{-1}\zeta \\
 \ &+e_{l_1}(\sum\limits_{m+h=l+1} \alpha_h^{s,l_2}\overline{e}_s(\alpha_m^{i,l_3}))\overline{e}_i - \sum\limits_{m+h=l+1} \alpha_h^{s,l_2}\overline{e}_s(\alpha_m^{l_1,l_3})\sqrt{-1}\zeta \\
\ &-e_{l_2}(\sum\limits_{m+h=l+1} \alpha_h^{s,l_1}\overline{e}_s(\alpha_m^{i,l_3}))\overline{e}_i + \sum\limits_{m+h=l+1} \alpha_h^{s,l_1}\overline{e}_s(\alpha_m^{l_2,l_3})\sqrt{-1}\zeta \\
\ &+e_{l_3}(\sum\limits_{m+h=l+1} \alpha_h^{s,l_1}\overline{e}_s(\alpha_m^{i,l_2}))\overline{e}_i - \sum\limits_{m+h=l+1} \alpha_h^{s,l_1}\overline{e}_s(\alpha_m^{l_3,l_2})\sqrt{-1}\zeta \\
\ & -e_{l_1}(\sum\limits_{m+h=l+1} \alpha_m^{s,l_3}\overline{e}_s(\alpha_h^{i,l_2}))\overline{e}_i + \sum\limits_{m+h=l+1} \alpha_m^{s,l_3}\overline{e}_s(\alpha_h^{l_1,l_2})\sqrt{-1}\zeta \\
\ &+e_{l_2}(\sum\limits_{m+h=l+1} \alpha_m^{s,l_3}\overline{e}_s(\alpha_h^{i,l_1}))\overline{e}_i - \sum\limits_{m+h=l+1} \alpha_m^{s,l_3}\overline{e}_s(\alpha_h^{l_2,l_1})\sqrt{-1}\zeta \\
\ &-e_{l_3}(\sum\limits_{m+h=l+1} \alpha_m^{s,l_2}\overline{e}_s(\alpha_h^{i,l_1}))\overline{e}_i + \sum\limits_{m+h=l+1} \alpha_m^{s,l_2}\overline{e}_s(\alpha_h^{l_3,l_1})\sqrt{-1}\zeta.
\end{align*}
Since $\alpha_k^{k_1,k_2} = \alpha_k^{k_2,k_1}$, the sum of terms with $ \zeta$ vanishes. One calculates
\begin{align}
 \notag RHS =\quad e_{l_1}(\sum\limits_{m+h=l+1} \alpha_m^{s,l_2}\overline{e}_s(\alpha_h^{i,l_3}))\overline{e}_i  -e_{l_2}(\sum\limits_{m+h=l+1} \alpha_m^{s,l_1}\overline{e}_s(\alpha_h^{i,l_3}))\overline{e}_i\\
\notag +e_{l_3}(\sum\limits_{m+h=l+1} \alpha_m^{s,l_1}\overline{e}_s(\alpha_h^{i,l_2}))\overline{e}_i
-e_{l_1}(\sum\limits_{m+h=l+1} \alpha_h^{s,l_3}\overline{e}_s(\alpha_m^{i,l_2}))\overline{e}_i\\
\notag +e_{l_2}(\sum\limits_{m+h=l+1} \alpha_h^{s,l_3}\overline{e}_s(\alpha_m^{i,l_1}))\overline{e}_i
-e_{l_3}(\sum\limits_{m+h=l+1} \alpha_h^{s,l_2}\overline{e}_s(\alpha_m^{i,l_1}))\overline{e}_i\\
\notag +e_{l_1}(\sum\limits_{m+h=l+1} \alpha_h^{s,l_2}\overline{e}_s(\alpha_m^{i,l_3}))\overline{e}_i
-e_{l_2}(\sum\limits_{m+h=l+1} \alpha_h^{s,l_1}\overline{e}_s(\alpha_m^{i,l_3}))\overline{e}_i\\
\notag -e_{l_3}(\sum\limits_{m+h=l+1} \alpha_h^{s,l_2}\overline{e}_s(\alpha_m^{i,l_1}))\overline{e}_i
-e_{l_1}(\sum\limits_{m+h=l+1} \alpha_m^{s,l_3}\overline{e}_s(\alpha_h^{i,l_2}))\overline{e}_i\\
\notag +e_{l_2}(\sum\limits_{m+h=l+1} \alpha_m^{s,l_3}\overline{e}_s(\alpha_h^{i,l_1}))\overline{e}_i
-e_{l_3}(\sum\limits_{m+h=l+1} \alpha_m^{s,l_2}\overline{e}_s(\alpha_h^{i,l_1}))\overline{e}_i. \\
\notag
\end{align}
From Lemma \ref{4.1}.(2), we have
\begin{equation}\label{}
\begin{aligned}
RHS = \sum\limits_{m+h=l+1}\{&\quad e_{l_1}(\alpha_m^{s,l_2}) \overline{e}_s(\alpha_h^{i,l_3})
+ \alpha_m^{s,l_2}\overline{e}_s e_{l_1}(\alpha_h^{i,l_3})
- \sqrt{-1}\alpha_m^{l_1,l_2}\zeta(\alpha_h^{i,l_3})\\
&-e_{l_2}(\alpha_m^{s,l_1}) \overline{e}_s(\alpha_h^{i,l_3})
- \alpha_m^{s,l_1}\overline{e}_s e_{l_2}(\alpha_h^{i,l_3})
+ \sqrt{-1}\alpha_m^{l_2,l_1}\zeta(\alpha_h^{i,l_3})\\
&+e_{l_3}(\alpha_m^{s,l_1}) \overline{e}_s(\alpha_h^{i,l_2})
+ \alpha_m^{s,l_1}\overline{e}_s e_{l_3}(\alpha_h^{i,l_2})
- \sqrt{-1}\alpha_m^{l_3,l_1}\zeta(\alpha_h^{i,l_2})\\
&-e_{l_1}(\alpha_h^{s,l_3}) \overline{e}_s(\alpha_m^{i,l_2})
- \alpha_h^{s,l_3}\overline{e}_s e_{l_1}(\alpha_h^{i,l_2})
+ \sqrt{-1}\alpha_h^{l_1,l_3}\zeta(\alpha_m^{i,l_2}) \\
&+e_{l_2}(\alpha_h^{s,l_3}) \overline{e}_s(\alpha_m^{i,l_1})
+ \alpha_h^{s,l_3}\overline{e}_s e_{l_2}(\alpha_m^{i,l_1})
- \sqrt{-1}\alpha_h^{l_2,l_3}\zeta(\alpha_m^{i,l_1})\\
& -e_{l_3}(\alpha_h^{s,l_2}) \overline{e}_s(\alpha_m^{i,l_1})
- \alpha_h^{s,l_2}\overline{e}_s e_{l_3}(\alpha_m^{i,l_1})
+ \sqrt{-1}\alpha_h^{l_3,l_2}\zeta(\alpha_m^{i,l_1})\\
& +e_{l_1}(\alpha_h^{s,l_2}) \overline{e}_s(\alpha_m^{i,l_3})
+ \alpha_h^{s,l_2}\overline{e}_s e_{l_1}(\alpha_m^{i,l_3})
- \sqrt{-1}\alpha_h^{l_1,l_2}\zeta(\alpha_m^{i,l_3})\\
&-e_{l_2}(\alpha_h^{s,l_1}) \overline{e}_s(\alpha_m^{i,l_3})
- \alpha_h^{s,l_1}\overline{e}_s e_{l_2}(\alpha_m^{i,l_3})
+\sqrt{-1}\alpha_h^{l_2,l_1}\zeta(\alpha_m^{i,l_3})\\
& +e_{l_3}(\alpha_h^{s,l_1}) \overline{e}_s(\alpha_m^{i,l_2})
+ \alpha_h^{s,l_1}\overline{e}_s e_{l_3}(\alpha_m^{i,l_2})
-\sqrt{-1}\alpha_h^{l_3,l_1}\zeta(\alpha_m^{i,l_2})\\
 &-e_{l_1}(\alpha_m^{s,l_3}) \overline{e}_s(\alpha_h^{i,l_2})
- \alpha_m^{s,l_3}\overline{e}_s e_{l_1}(\alpha_m^{i,l_2})
+\sqrt{-1}\alpha_m^{l_1,l_3}\zeta(\alpha_h^{i,l_2})\\
 &+e_{l_2}(\alpha_m^{s,l_3}) \overline{e}_s(\alpha_h^{i,l_1})
+ \alpha_m^{s,l_3}\overline{e}_s e_{l_2}(\alpha_h^{i,l_1})
-\sqrt{-1}\alpha_m^{l_2,l_3}\zeta(\alpha_h^{i,l_1})\\
 &-e_{l_3}(\alpha_m^{s,l_2}) \overline{e}_s(\alpha_h^{i,l_1})
- \alpha_m^{s,l_2}\overline{e}_s e_{l_3}(\alpha_h^{i,l_1})
+\sqrt{-1}\alpha_m^{l_3,l_2}\zeta(\alpha_h^{i,l_1})\}\overline{e}_i,
\end{aligned}
\end{equation}
where the sum of terms with $ \zeta $ vanishes again. From assumption and \eqref{eq56} it follows that
\begin{align*}
   \ &e_{l_1}(\alpha_m^{s,l_2}) \overline{e}_s(\alpha_h^{i,l_3}) - e_{l_2}(\alpha_m^{s,l_1}) \overline{e}_s(\alpha_h^{i,l_3}) \\
 =\ &-\frac{1}{4} \overline{e}_s(\alpha_h^{i,l_3}) \{\sum\limits_{p+q=m}\alpha_p^{j,l_1}\overline{e}_j(\alpha_q^{s,l_2}) -\sum\limits_{p+q=m}\alpha_p^{j,l_2}\overline{e}_j(\alpha_q^{s,l_1}) +\sum\limits_{p+q=m}\alpha_q^{j,l_1}\overline{e}_j(\alpha_p^{s,l_2})-\sum\limits_{p+q=m}\alpha_q^{j,l_2}\overline{e}_j(\alpha_p^{s,l_1}) \},
\end{align*}

%$$
%\notag e_{l_1}(\alpha_m^{s,l_2}) \overline{e}_s(\alpha_h^{i,l_3}) - e_{l_2}(\alpha_m^{s,l_1}) \overline{e}_s(\alpha_h^{i,l_3})
%$$
%
%$
%\indent \notag = \frac{-1}{4} \overline{e}_s(\alpha_h^{i,l_3}) \{\sum\limits_{p+q=m}\alpha_p^{j,l_1}\overline{e}_j(\alpha_q^{s,l_2}) -\sum\limits_{p+q=m}\alpha_p^{j,l_2}\overline{e}_j(\alpha_q^{s,l_1}) +\sum\limits_{p+q=m}\alpha_q^{j,l_1}\overline{e}_j(\alpha_p^{s,l_2})-\sum\limits_{p+q=m}\alpha_q^{j,l_2}\overline{e}_j(\alpha_p^{s,l_1}) \},
%$\\
\begin{align*}
   \ &e_{l_3}(\alpha_m^{s,l_1}) \overline{e}_s(\alpha_h^{i,l_2}) - e_{l_1}(\alpha_m^{s,l_3}) \overline{e}_s(\alpha_h^{i,l_2}) \\
 =\ &-\frac{1}{4} \overline{e}_s(\alpha_h^{i,l_2}) \{\sum\limits_{p+q=m}\alpha_p^{j,l_3}\overline{e}_j(\alpha_q^{s,l_1}) -\sum\limits_{p+q=m}\alpha_p^{j,l_1}\overline{e}_j(\alpha_q^{s,l_3}) +\sum\limits_{p+q=m}\alpha_q^{j,l_3}\overline{e}_j(\alpha_p^{s,l_1})-\sum\limits_{p+q=m}\alpha_q^{j,l_1}\overline{e}_j(\alpha_p^{s,l_3}) \},
\end{align*}

\begin{align*}
   \ &e_{l_2}(\alpha_m^{s,l_3}) \overline{e}_s(\alpha_h^{i,l_1}) - e_{l_3}(\alpha_m^{s,l_2}) \overline{e}_s(\alpha_h^{i,l_1}) \\
 =\ &-\frac{1}{4} \overline{e}_s(\alpha_h^{i,l_1}) \{\sum\limits_{p+q=m}\alpha_p^{j,l_2}\overline{e}_j(\alpha_q^{s,l_3}) -\sum\limits_{p+q=m}\alpha_p^{j,l_3}\overline{e}_j(\alpha_q^{s,l_2}) +\sum\limits_{p+q=m}\alpha_q^{j,l_2}\overline{e}_j(\alpha_p^{s,l_3})-\sum\limits_{p+q=m}\alpha_q^{j,l_3}\overline{e}_j(\alpha_p^{s,l_2}) \},
\end{align*}
\begin{align*}
   \ & \alpha_m^{s,l_1} \overline{e}_s\{ e_{l_3}(\alpha_h^{i,l_2}) - e_{l_2}(\alpha_h^{i,l_3})\}\\
 =\ &-\frac{1}{4} \alpha_m^{s,l_1} \overline{e}_s \{ \sum\limits_{p+q=h}\alpha_p^{j,l_3}\overline{e}_j(\alpha_q^{i,l_2}) -\sum\limits_{p+q=h}\alpha_q^{j,l_2}\overline{e}_j(\alpha_p^{i,l_3}) +\sum\limits_{p+q=h}\alpha_q^{j,l_3}\overline{e}_j(\alpha_p^{i,l_2})-\sum\limits_{p+q=h}\alpha_p^{j,l_2}\overline{e}_j(\alpha_q^{s,l_3}) \}\\
 =\ &-\frac{1}{4} \alpha_m^{s,l_1} \sum\limits_{p+q=h} \{ \overline{e}_s(\alpha_p^{j,l_3})\overline{e}_j(\alpha_q^{i,l_2})
- \overline{e}_s(\alpha_q^{j,l_2})\overline{e}_j(\alpha_p^{i,l_3})
+ \overline{e}_s(\alpha_q^{j,l_3})\overline{e}_j(\alpha_p^{i,l_2}) -\overline{e}_s(\alpha_p^{j,l_2}))\overline{e}_j(\alpha_q^{i,l_3}) \}\\
  &-\frac{1}{4} \alpha_m^{s,l_1} \sum\limits_{p+q=h} \{ \alpha_p^{j,l_3} \overline{e}_s \overline{e}_j(\alpha_q^{i,l_2})
- \alpha_q^{j,l_2} \overline{e}_s \overline{e}_j(\alpha_p^{i,l_3})
+ \alpha_q^{j,l_3} \overline{e}_s \overline{e}_j(\alpha_p^{i,l_2})
- \alpha_p^{j,l_2} \overline{e}_s \overline{e}_j(\alpha_q^{i,l_3}) \},
\end{align*}
\begin{align*}
   \ &\alpha_m^{s,l_3} \overline{e}_s\{ e_{l_2}(\alpha_h^{i,l_1}) - e_{l_1}(\alpha_h^{i,l_2})\} \\
 =\ &-\frac{1}{4} \alpha_m^{s,l_3} \overline{e}_s \{ \sum\limits_{p+q=h}\alpha_p^{j,l_2}\overline{e}_j(\alpha_q^{i,l_1}) -\sum\limits_{p+q=h}\alpha_q^{j,l_1}\overline{e}_j(\alpha_p^{i,l_2}) +\sum\limits_{p+q=h}\alpha_q^{j,l_2}\overline{e}_j(\alpha_p^{i,l_1})-\sum\limits_{p+q=h}\alpha_p^{j,l_1}\overline{e}_j(\alpha_q^{s,l_2}) \}\\
 =\ &-\frac{1}{4} \alpha_m^{s,l_3} \sum\limits_{p+q=h} \{ \overline{e}_s(\alpha_p^{j,l_2})\overline{e}_j(\alpha_q^{i,l_1})
- \overline{e}_s(\alpha_q^{j,l_1})\overline{e}_j(\alpha_p^{i,l_2})
+ \overline{e}_s(\alpha_q^{j,l_2})\overline{e}_j(\alpha_p^{i,l_1}) -\overline{e}_s(\alpha_p^{j,l_1}))\overline{e}_j(\alpha_q^{i,l_2}) \}\\
  &-\frac{1}{4} \alpha_m^{s,l_3} \sum\limits_{p+q=h} \{ \alpha_p^{j,l_2} \overline{e}_s \overline{e}_j(\alpha_q^{i,l_1})
- \alpha_q^{j,l_1} \overline{e}_s \overline{e}_j(\alpha_p^{i,l_2})
+ \alpha_q^{j,l_2} \overline{e}_s \overline{e}_j(\alpha_p^{i,l_1})
- \alpha_p^{j,l_1} \overline{e}_s \overline{e}_j(\alpha_q^{i,l_2}) \},
\end{align*}
\begin{align*}
   \ & \alpha_m^{s,l_2} \overline{e}_s\{ e_{l_1}(\alpha_h^{i,l_3}) - e_{l_3}(\alpha_h^{i,l_1})\}\\
 =\ &-\frac{1}{4} \alpha_m^{s,l_2} \overline{e}_s \{ \sum\limits_{p+q=h}\alpha_p^{j,l_1}\overline{e}_j(\alpha_q^{i,l_3}) -\sum\limits_{p+q=h}\alpha_q^{j,l_3}\overline{e}_j(\alpha_p^{i,l_1}) +\sum\limits_{p+q=h}\alpha_q^{j,l_1}\overline{e}_j(\alpha_p^{i,l_3})-\sum\limits_{p+q=h}\alpha_p^{j,l_3}\overline{e}_j(\alpha_q^{s,l_1}) \}\\
 =\ &-\frac{1}{4} \alpha_m^{s,l_2} \sum\limits_{p+q=h} \{ \overline{e}_s(\alpha_p^{j,l_1})\overline{e}_j(\alpha_q^{i,l_3})
- \overline{e}_s(\alpha_q^{j,l_3})\overline{e}_j(\alpha_p^{i,l_1})
+ \overline{e}_s(\alpha_q^{j,l_1})\overline{e}_j(\alpha_p^{i,l_3}) -\overline{e}_s(\alpha_p^{j,l_3}))\overline{e}_j(\alpha_q^{i,l_1}) \}\\
  &-\frac{1}{4} \alpha_m^{s,l_2} \sum\limits_{p+q=h} \{ \alpha_p^{j,l_1} \overline{e}_s \overline{e}_j(\alpha_q^{i,l_3})
- \alpha_q^{j,l_3} \overline{e}_s \overline{e}_j(\alpha_p^{i,l_1})
+ \alpha_q^{j,l_1} \overline{e}_s \overline{e}_j(\alpha_p^{i,l_3})
- \alpha_p^{j,l_3} \overline{e}_s \overline{e}_j(\alpha_q^{i,l_1}) \}.
\end{align*}

The comparison of the six equalities above and a brute-force routine computation gives
$$ \overline{\partial}_b(\sum\limits_{m+h=l+1} [\imath_1^{-1}\phi_m, \imath_1^{-1}\phi_h]) = 0. $$
\end{proof}

Now we can prove the main theorem of this paper.
\begin{theo} [{\cite[MAIN THEOREM]{[AM]}}]
Let $ (M,\ ^0T'') $ be a normal strongly pseudoconvex CR-manifold with $ \dim_{\mathbb{R}}M  = 2n-1 \geq 7 $. And we assume that its canonical line bundle $ K_M = \wedge^n(T')^* $ is trivial in CR-sense. Then the obstructions in $ \imath_1^{-1}(\mathbf{Z}^1) $ appear in $ \mathbf{J}^{n-2,2} $ which defined as \eqref{assume}. That is, if $ \mathbf{J}^{n-2,2}=0 $, then any deformation of CR structures in $ \imath_1^{-1}(\mathbf{Z}^1) $ is unobstructed.
\end{theo}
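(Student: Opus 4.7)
The plan is to transcribe Tian-Todorov's iterative construction of a formal Maurer-Cartan element to the CR setting, using Corollary \ref{CRTT}, Lemma \ref{5.4} and the hypothesis $\mathbf{J}^{n-2,2}=0$ at the inductive step, and then invoke CR Hodge theory for convergence. Choose a basis $\{\phi_1^{(\nu)}\}_{\nu=1}^{m}$ of $\overline{\partial}_b$-harmonic representatives lying in $\imath_1^{-1}(\mathbf{Z}^1)$, and set $\phi_1(t) = \sum_{\nu=1}^{m} t_\nu \phi_1^{(\nu)}$ with $t = (t_1,\ldots,t_m)\in\mathbb{C}^m$. The goal is to produce a formal power series $\phi(t) = \sum_{k\ge 1}\phi_k(t)$, each $\phi_k$ homogeneous of degree $k$ in $t$ and $\imath_1\phi_k\in\mathbf{Z}^1$, solving $\mathcal{P}(\phi)=0$, equivalently
$$
\overline{\partial}_b\phi_k \;=\; -\tfrac{1}{2}\sum_{m+h=k}[\phi_m,\phi_h],\qquad k\ge 2.
$$

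Suppose $\phi_1,\ldots,\phi_l$ have been built, and set $\Omega_{l+1} := -\tfrac{1}{2}\sum_{m+h=l+1}[\phi_m,\phi_h]$. By Lemma \ref{5.4}, $\overline{\partial}_b\Omega_{l+1}=0$; pushing through $\imath_2$ via Proposition \ref{5.1} and noting that the $d\theta$-correction vanishes because each $\imath_1\phi_k$ sits in $\mathbf{F}^{n-2,1}$, one obtains $d''(\imath_2\Omega_{l+1})=0$. Corollary \ref{CRTT}, applied termwise to pairs $(\imath_1\phi_m,\imath_1\phi_h)\in\mathbf{Z}^1\times\mathbf{Z}^1$, together with Proposition \ref{5.2}, yields
$$
\imath_2\Omega_{l+1} \;=\; \tfrac{1}{4}\sum_{m+h=l+1} d'\bigl(\phi_m\lrcorner\imath_1\phi_h + \phi_h\lrcorner\imath_1\phi_m\bigr) \;\in\; d'\mathbf{F}^{n-3,2},
$$
so $\imath_2\Omega_{l+1}$ represents a class in $\mathbf{J}^{n-2,2}$, zero by hypothesis. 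Using the resulting $d'd''$-lemma, write $\imath_2\Omega_{l+1} = d'd''\eta$ for some $\eta\in\mathbf{F}^{n-3,1}$, and define $\imath_1\phi_{l+1} := -d'\eta$. Then $d'(\imath_1\phi_{l+1})=0$ by $d'\circ d'=0$, the identity \eqref{eq4.1} ensures $\imath_1\phi_{l+1}\in\mathbf{F}^{n-2,1}$, and the anticommutation $d'd''+d''d'=0$ combined with Proposition \ref{5.1} gives $\overline{\partial}_b\phi_{l+1}=\Omega_{l+1}$, closing the induction.

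For convergence, one follows the Kuranishi-Tian-Todorov scheme sketched in Section \ref{reduction}, adapted to CR geometry: the Kohn Laplacian on a compact strongly pseudoconvex CR-manifold of dimension $\ge 7$ admits a bounded Green's operator $\G_b$ and adjoint $\overline{\partial}_b^*$ on the appropriate Folland-Stein spaces, so the algebraic solution above can be replaced, at each order, by the harmonic-orthogonal representative
$$
\phi_{l+1} \;=\; -\tfrac{1}{2}\,\overline{\partial}_b^*\,\G_b\sum_{m+h=l+1}[\phi_m,\phi_h],
$$
which remains inside $\imath_1^{-1}\mathbf{Z}^1$ by the $d'd''$-lemma argument above, and standard Kuranishi-type majorant estimates give convergence on a small polydisc. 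The principal obstacle is the simultaneous enforcement of the two constraints on $\phi_{l+1}$: the algebraic condition $\imath_1\phi_{l+1}\in\mathbf{Z}^1$ coming from $d'$-closedness, and the analytic condition $\phi_{l+1}\in\mathrm{Im}(\overline{\partial}_b^*\G_b)$ required for convergence; reconciling them is the point where the CR double-complex Hodge decomposition plays the role occupied in the K\"ahler case by the $\partial\overline{\partial}$-lemma, and where the new Lemma \ref{5.4} substitutes for the usual Bianchi-type identity in the original Tian-Todorov proof.
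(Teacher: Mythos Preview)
Your approach is essentially the paper's: set up the iterative Maurer--Cartan system, use Corollary~\ref{CRTT} and Proposition~\ref{5.2} to see that the $(l{+}1)$-st obstruction is $d'$-exact in $\mathbf{F}^{n-2,2}$, use Lemma~\ref{5.4} together with Proposition~\ref{5.1} to see it is $d''$-closed, and then kill the resulting class in $\mathbf{J}^{n-2,2}$; convergence is deferred to the CR Hodge theory of \cite{[A3]}. The paper phrases the equation on the $\mathbf{F}^{n-2,\bullet}$-side as $d''\phi_{l+1}=\tfrac12\sum d'(\imath_1^{-1}\phi_m\lrcorner\phi_h)$ and simply says ``by assumption we can solve $\phi_{l+1}(t)$'', without making explicit the $d'd''$-potential you introduce.

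There is, however, a genuine slip in your inductive step. The hypothesis $\mathbf{J}^{n-2,2}=0$ only says that a $d''$-closed, $d'$-exact element of $\mathbf{F}^{n-2,2}$ lies in $d''\mathbf{F}^{n-2,1}\cap d'\mathbf{F}^{n-3,2}$; it does \emph{not} furnish a potential $\eta\in\mathbf{F}^{n-3,1}$ with $\imath_2\Omega_{l+1}=d'd''\eta$. So your sentence ``Using the resulting $d'd''$-lemma, write $\imath_2\Omega_{l+1}=d'd''\eta$'' invokes a strictly stronger statement than what is assumed. Consequently your neat choice $\imath_1\phi_{l+1}=-d'\eta$, which automatically lands in $\mathbf{Z}^1$, is not available from $\mathbf{J}^{n-2,2}=0$ alone. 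The paper's proof is equally laconic at this point---it asserts a $\mathbf{Z}^1$-valued solution without saying how $d'\phi_{l+1}=0$ is arranged---so the honest reading is that both arguments lean on the original Akahori--Miyajima construction for this step; your write-up should either cite \cite{[AM]} for the existence of a $d'$-closed $d''$-primitive, or weaken the claim to the full $d'd''$-lemma case (which is the content of the paper's final corollary). Your last paragraph already flags precisely this tension between the $\mathbf{Z}^1$-constraint and the Green's-operator normalization, so you have identified the right issue; just do not claim it is resolved by $\mathbf{J}^{n-2,2}=0$.
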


\begin{proof}
We will construct a $ \mathbf{Z}^1$-valued polynomial $$ \phi^l(t)=\sum\limits_{k=1}^l\phi_k(t) $$ in $ t $, satisfying $ \mathcal{P}(\imath_1^{-1}\phi^l(t))=0$ for all $l$. For convenience, sometimes we omit the $t$ in $\phi_k(t)$ in the sequel. According to Proposition \ref{5.3}, it is equivalent to solve the system of equations
$$ d''\phi_k - \frac{1}{2}\sum\limits_{m+h=k} d'(\imath_1^{-1}\phi_m \lrcorner \phi_h) = 0. $$

For $ k=1$, we assume $\imath_1^{-1}\phi_1\in H_{\overline{\partial}_b}^1(M, T')$, that is, $\overline{\partial}_b(\imath_1
^{-1}\phi_1)=0 $.

For $ k=2$, we want to solve $\phi_2(t)$ in equation
$$ d''\phi_2 - \frac{1}{2} d'(\imath_1^{-1}\phi_1 \lrcorner \phi_1) = 0. $$
Since $ d''\phi_1=d''\imath_1\imath_1^{-1}\phi_1= \imath_2\overline{\partial}_b(\imath_1^{-1}\phi_1) = 0$ and similarly $ d''(\imath_1^{-1}\phi_1) = 0, $ we have
$$ d''d'(\imath_1^{-1}\phi_1 \lrcorner \phi_1) = -d'd''(\imath_1^{-1}\phi_1 \lrcorner \phi_1) = -d'(d''(\imath_1^{-1}\phi_1) \lrcorner \phi_1)- d'(\imath_1^{-1}\phi_1 \lrcorner d''\phi_1) = 0. $$
Then we can find $ \phi_2(t)$ by the assumption $ \mathbf{J}^{n-2,2}=0 $.

By induction, we may assume that the equation is solved for $ k \leq l $ and we have constructed $ \phi_k(t)$, $ k \leq l $. For $ k= l+1 $, according to Corollary \ref{CRTT},
$$ \sum\limits_{m+h=l+1} d''d'(\imath_1^{-1}\phi_m \lrcorner \phi_h)= \sum\limits_{m+h=l+1} -d''[\imath_1^{-1}\phi_m, \imath_1^{-1}\phi_h] \lrcorner \omega, $$
where $ \omega\in\Gamma(M,\wedge^n(T')^*)$ is a nowhere vanishing section satisfying $ d''\omega=0 $ as Definition \ref{imath-q}. Then by  Proposition \ref{5.1}, we have
$$\sum\limits_{m+h=l+1} -d''[\imath_1^{-1}\phi_m, \imath_1^{-1}\phi_h] \lrcorner \omega= \overline{\partial}_b(\sum\limits_{m+h=l+1} [\imath_1^{-1}\phi_m, \imath_1^{-1}\phi_h])\lrcorner \omega =0, $$
where the last equality follows from Lemma \ref{5.4}. Then by assumption we can solve $ \phi_{l+1}(t)$.

By a canonical choice of $\phi_\bullet$ and the same argument as in \cite{[A3]}, we can prove the convergence of $ \phi(t) = \sum\limits_{k=1}^{+\infty}\phi_k(t) $ with respect to the Folland-Stein norm (cf.\cite{[A2]}).
\end{proof}

\begin{coro} [{\cite[Corollary 9.1]{[AM]}}]
Suppose that $\dim_{\mathbb{R}}M \geq 7 $. If $K_M$ is trivial in CR-sense and if $d'd''$-lemma holds in $(\mathbf{F}^{p,q}, d', d'')$, then all Kuranishi families of strongly pseudoconvex CR-structures are unobstructed.
\end{coro}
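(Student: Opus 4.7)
The plan is to derive this corollary directly from the main theorem proved immediately above, so the entire task reduces to verifying that the hypothesis $\mathbf{J}^{n-2,2}=0$ of that theorem is forced by the $d'd''$-lemma. The observation already recorded after the definition of $\mathbf{J}^{n-2,q}$ in \eqref{assume} gives the vanishing essentially for free; I would simply spell out why.

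Concretely, I would pick an arbitrary element $\phi \in \ker d'' \cap d'\mathbf{F}^{n-3,2}$ representing a class in $\mathbf{J}^{n-2,2}$. Such $\phi$ is by construction both $d''$-closed and $d'$-exact, so the $d'd''$-lemma applies and produces $\psi \in \mathbf{F}^{n-3,1}$ with $\phi = d'd''\psi$. Using the anticommutation relation $d'd'' + d''d' = 0$ on $(\mathbf{F}^{p,q}, d', d'')$ noted just before Proposition~\ref{5.1}, this rewrites as $\phi = -d''(d'\psi)$ with $d'\psi \in \mathbf{F}^{n-2,1}$. Hence $\phi$ lies in $d''\mathbf{F}^{n-2,1} \cap d'\mathbf{F}^{n-3,2}$, i.e.\ $\phi$ represents the zero class. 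Therefore $\mathbf{J}^{n-2,2}=0$.

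Once the vanishing is in hand, I would invoke the main theorem to conclude that every deformation in $\imath_1^{-1}(\mathbf{Z}^1)$ is unobstructed: the inductive construction of the Maurer--Cartan solution $\phi(t) = \sum_{k\geq 1} \phi_k(t)$ proceeds through every step since the only obstruction to solving $d''\phi_{l+1} = \tfrac{1}{2}\sum_{m+h=l+1} d'(\imath_1^{-1}\phi_m \lrcorner \phi_h)$ lies in $\mathbf{J}^{n-2,2}$, which is now trivial. Combined with the convergence argument in the Folland--Stein norm cited at the end of the proof of the main theorem, this produces a genuine deformation family parametrized by an open polydisc, which is exactly the statement that the Kuranishi family of strongly pseudoconvex CR-structures on $M$ is unobstructed.

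There is essentially no serious obstacle; the only point requiring a moment of attention is bidegree bookkeeping, namely confirming that the potential $\psi$ delivered by the $d'd''$-lemma really lives in $\mathbf{F}^{n-3,1}$ so that $d'\psi \in \mathbf{F}^{n-2,1}$ is a legitimate element. This is immediate from the fact that $d'$ and $d''$ are the components of $d$ on the bigraded complex $\mathbf{F}^{\bullet,\bullet}$ of type $(+1,0)$ and $(0,+1)$, respectively, as discussed at the beginning of Section~\ref{s-tt}.
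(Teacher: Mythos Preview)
Your proposal is correct and matches the paper's approach: the paper gives no explicit proof of this corollary, relying entirely on the remark made just after \eqref{assume} that the $d'd''$-lemma forces $\mathbf{J}^{n-2,q}=0$, after which the main theorem applies directly. Your argument simply spells out that remark and the invocation of the main theorem, which is exactly what is intended.
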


\section*{Acknowledgement}
This work was mainly completed during the authors' visit to Institute of Mathematics, Academia Sinica in the summer of 2018.
They would like to express their gratitude to the institute for their hospitality and the wonderful work environment during their visit, especially Professors Jih-Hsin Cheng and Chin-Yu Hsiao for many discussions on CR geometry. Last, we would like to thank Professor K. Miyajima for a useful comment on our paper.

%\addcontentsline{toc}{section}{}

\end{document}